\documentclass[12pt]{article}

\usepackage{amsmath}
\allowdisplaybreaks[4]

\usepackage[all]{xy}
\usepackage{amssymb}
\usepackage{amsthm}
\usepackage{hyperref}
\hypersetup{colorlinks=true,linkcolor=blue,citecolor=red}
\usepackage{amsmath}
\usepackage{amscd}
\usepackage{verbatim}
\usepackage{eurosym}
\usepackage{float}
\usepackage{color}
\usepackage{dcolumn}
\usepackage[mathscr]{eucal}
\usepackage[all]{xy}
\usepackage{hyperref}
\usepackage{mathrsfs}
\usepackage{amsmath}
\usepackage{amssymb}
\usepackage{amsfonts,ifpdf}
\usepackage{graphicx}
\usepackage{times}
\usepackage{float}
\usepackage{epstopdf}
\usepackage{cite}
\usepackage{youngtab}
\usepackage{ytableau}
\ytableausetup
{mathmode, boxsize=0.9em}

\newlength{\boxedparwidth}
  {\begin{center} \begin{tabular}{|@{\hspace{.315in}}c@{\hspace{.15in}}|}
                  \hline \\ \begin{minipage}[t]{\boxedparwidth}
                  \setlength{\parindent}{.25in}}%
  {\end{minipage} \\ \\ \hline \end{tabular} \end{center}}

\allowdisplaybreaks

\newtheorem{thm}{Theorem}[section]

\newtheorem{lem}[thm]{Lemma}
\newtheorem{cor}[thm]{Corollary}

\numberwithin{equation}{section}

\begin{document}

\begin{center}

 {\Large \bf Asymptotics and inequalities for the broken $k$-diamond
 partition function}
\end{center}
\vskip 3mm

\begin{center}
Ying Zhong\\[8pt]
School of Mathematics\\
Hunan University\\
Changsha 410082, P. R. China
\\[15pt]

Emails: YingZhong@hnu.edu.cn
\\[15pt]

\end{center}

\vskip 3mm

\begin{abstract}
Many papers have studied inequalities for Andrews and Paule's broken $k$-diamond partition function $\Delta_{k}(n)$ when $k=1$ or $2$. In this paper, we derive an exact formula for $\Delta_{k}(n)$ when $k\geq 1$. Building on this result, we also derive an asymptotic formula for $\Delta_{k}(n)$ with an explicit error bound. Using this formula, we prove that for $k\geq 1$ and sufficiently large $n$, $\Delta_{k}(n)$ satisfies the Tur\'an and Laguerre inequalities of any order and exhibits asymptotic complete monotonicity. Define $n_k:=\max\left\{\left\lceil
8k^{3}+\frac{k+1}{12}\right\rceil,526\right\}$. Furthermore, we show that $\Delta_{k}(n)$ is log-concave for $k\ge3$ and $
n\ge n_k
$. Consequently, it follows that $\Delta_{k}(a)\Delta_{k}(b)\ge\Delta_{k}(a+b)$ for $k\ge3$ and $a,b \ge n_k$.

\vskip 6pt

\noindent {\bf AMS Classifications}:11P82, 05A19, 30A10
\\ [7pt]
{\bf Keywords}: broken $k$-diamond partition function, asymptotics, Tur\'an inequalities, Laguerre inequalities, asymptotically complete monotonicity
\end{abstract}

\section{Introduction}

The objective of this paper is to establish asymptotics for the broken $k$-diamond partition function and further show that this function satisfies the Tur\'an inequalities, the Laguerre inequalities, and asymptotically complete monotonicity, when $k\in\mathbb{N}$.

The broken $k$-diamond partitions were introduced by Andrews and Paule \cite{Andrews-Paule-2007}.
Let $\Delta_{k}(n)$ denote the number of broken $k$-diamond partitions of $n$. Andrews and Paule \cite{Andrews-Paule-2007} gave the following generating function for $\Delta_{k}(n)$:
\begin{align}\label{genef}
\sum_{n=0}^{\infty} \Delta_{k}(n) q^{n} &=\prod_{n=1}^{\infty} \frac{\left(1-q^{2 n}\right)\left(1-q^{(2 k+1) n}\right)}{\left(1-q^{n}\right)^{3}\left(1-q^{(4 k+2) n}\right)}.
\end{align}
The broken $k$-diamond partition function $\Delta_{k}(n)$ is known for its many elegant arithmetic properties. In particular, numerous Ramanujan-like congruences satisfied by $\Delta_k(n)$ were proved by many authors, including Andrews and Paule \cite{Andrews-Paule-2007}, Chan \cite{Chan-2008}, Chen, Fan and Yu \cite{Chen-Fan-Yu-2014}, Hirschhorn \cite{Hirschhorn-2018}, Paule and Radu \cite{Paule-Radu-2010} and so on.

A sequence $\{\alpha(n)\}_{n\geq 0}$ of real numbers satisfies the (second order) Tur\'an inequalities or to be log-concave if for $n\geq 1$,
\[\alpha^2(n)\geq \alpha(n-1)\alpha(n+1).\]
As stated by Chen, Jia and Wang \cite{Chen-Jia-Wang-2019} and Griffin, Ono, Rolen and Zagier \cite{Griffin-Ono-Rolen-Zagier-2019}, the higher order Tur\'an inequalities are conveniently formulated in terms of the Jensen polynomials. The Jensen polynomials $J_\alpha^{d, n}(X)$ of degree $d$ and shift $n$ associated to the sequence $\left\{\alpha(n)\right\}_{n \geq 0}$ are defined by
$$
J_\alpha^{d, n}(X):=\sum_{j=0}^d\binom{d}{j} \alpha(n+j) X^j.
$$
We say that the sequence $\left\{\alpha(n)\right\}_{n \geq 0}$ satisfies the order $d$ Tur\'an inequality at $n$ if and only if $J_\alpha^{d, n-1}(X)$ is hyperbolic.
Additionally, A sequence $\{\alpha(n)\}_{n\geq 0}$ is said to satisfy the Laguerre inequalities of order $m$ if for all $n\geq 0$,
\begin{equation*}
L_m(\alpha(n)):=\frac 12\sum_{k=0}^{2m}(-1)^{k+m} {{2m}\choose k}\alpha(n+k)\alpha(2m-k+n) \geq 0.
\end{equation*}
The Tur\'an inequalities and the Laguerre inequalities arise from the study of the Laguerre--P\'{o}lya class, which are closely related to the Riemann hypothesis. We refer the interested readers to  \cite{Csordas-Varga-1990,Dimitrov-1998,Dimitrov-Lucas-2011,
	Schur-Polya-1914,Szego-1948}. Consequently, this area has received widespread attention. Many researchers have been investigating the Tur\'{a}n inequalities \cite{Chen-Jia-Wang-2019,DeSalvo-Pak-2015,GORT-2019,Griffin-Ono-Rolen-Zagier-2019,
Ono-Pujahari-Rolen,Pandey-2024} and the Laguerre inequalities \cite{Wang-Yang-2022,Wang-Yang-2024,Yang-2024} for different
kinds of sequences.

Define
$$
\Delta^r \left(\alpha(n)\right):=\sum_{k=0}^r\binom{r}{k}(-1)^{r+k} \alpha(n+k) .
$$
We say that the sequence $\left\{\alpha(n)\right\}_{n \geq 0}$ satisfies $r$-order monotonicity if the sign of $\Delta^r \left(\alpha(n)\right)$ remains invariant. If $\left\{\alpha(n)\right\}_{n \geq 0}$ satisfies $r$-order monotonicity for any $r$, we say that it possesses complete monotonicity. Moreover, $\left\{\alpha(n)\right\}_{n \geq 0}$ is said to satisfy asymptotically complete monotonicity if, for any $r$, there exists an integer $N(r)$ such that $\left\{\alpha(n)\right\}_{n \geq 0}$ satisfies $r$-order monotonicity for $n>N(r)$. It should be noted that the complete monotonicity is another property related to the Laguerre--P\'{o}lya class, see \cite{Craven-Csordas-1983}.  Good \cite{Good-1997} conjectured and Gupta \cite{Gupta-1978} later proved, that for each $r\geq1$, $\Delta^r \left(p(n)\right) > 0$ holds for sufficiently large $n$, where $p(n)$ denotes the ordinary partition function. Furthermore, Wang, Xie and Zhang \cite{Wang-Xie-Zhang-2018} established the asymptotically complete monotonicity for the overpartition function.

Recently, inequalities for the broken $k$-diamond partition function have also received significant attention from many scholars.
Dong, Ji and Jia \cite{Dong-Ji-Jia-2023} obtained an asymptotic formula for $\Delta_{k}(n)$, where $k=1$ or $2$. Based on this asymptotic formula, they showed that $\Delta_{k}(n)$ satisfies the Tur\'an inequalities of any order for sufficiently large $n$ and is log-concave for $n\geq1$. They also conjectured that these results still hold for $k\geq 3$. Applying the asymptotic formula given in \cite{Dong-Ji-Jia-2023}, when $k=1$ or $2$, Jia \cite{Jia-2023} proved that $\Delta_{k}(n)$ satisfies the third order Tur\'an inequalities for $n\geq 6$ and Yang \cite{Yang-2024} established the Laguerre inequalities of order $2$ and determinantal inequalities of order $3$ for $\Delta_{k}(n)$.

However, the existing asymptotics and inequalities of $\Delta_{k}(n)$ only hold for $k=1$ or $2$. In this paper, we intend to extend these results to any positive integer $k$ and thus resolve the conjectures proposed by Dong, Ji and Jia \cite{Dong-Ji-Jia-2023}.

To state our asymptotic formula for $\Delta_{k}(n)$, we need to introduce some definitions. Here and throughout, $j$ and $h$ are positive integers with $\gcd(h, j)= 1$, and $h'\in\mathbb{Z}$ is defined by $hh' \equiv -1 \pmod j$. Define $0 \leq h_{j, t}<\frac{j}{\gcd(j, t)}$ to satisfy the congruence $h \frac{t}{\gcd(j, t)} h_{j, t} \equiv-1\pmod {\frac{j}{\gcd(j, t)}}$. We define
\begin{equation}\label{def:Omega}
\Omega_{k}(h,j):=\frac{\omega_{h,j}^3\omega_{h\frac{4k+2}{\gcd(j, 4k+2)},\frac{j}{\gcd(j, 4k+2)}}}{\omega_{h\frac{2}{\gcd(j, 2)},\frac{j}{\gcd(j, 2)}}\omega_{h\frac{2k+1}{\gcd(j, 2k+1)},\frac{j}{\gcd(j, 2k+1)}}},
\end{equation}
where
$$
\omega_{h, j}:=\exp \left(\pi i\sum_{\mu \pmod j}\left(\left(\frac{\mu}{j}\right)\right)\left(\left(\frac{h \mu}{j}\right)\right)\right),
$$
with the sawtooth function defined as
$$
((x)):= \begin{cases}x-\lfloor x\rfloor-\frac{1}{2}, & \text { if } x \in \mathbb{R} \backslash \mathbb{Z},
\\ 0, & \text { if } x \in \mathbb{Z} .\end{cases}
$$
It is worth noting that $\omega_{h, j}$ comes from the $\eta$-multiplier and can alternatively be written in terms of explicit roots of unity, see, for example, \cite[(5.2.4)]{Andrews-1976}.

We also define
\begin{align}\label{def:Aj}
A_{j}(n,n_1,n_2,n_3,n_4):=\sum_{\substack{0 \leq h < j \\ \gcd(h,j) = 1}}\Omega_{k}(h,j) e^{-\frac{2\pi i h}{j}\left(n-\frac{k+ 1}{12}\right)}
e^{\frac{\pi i}{j}b_{h,j}(n_1,n_2,n_3,n_4)},
\end{align}
where
\begin{align}\label{def:b}
b_{h,j}(n_1,n_2,n_3,n_4):=2n_1 h'+2n_2\gcd(j,4k+2)h_{j,4k+2}+n_3(3n_3-1)\gcd(j,2)h_{j,2}
\notag\\
+n_4(3n_4-1)\gcd(j,2k+1)h_{j,2k+1}.
\end{align}
Moreover, let
\begin{align}\label{def:r}
&r_{h,j}(n_1,n_2,n_3,n_4)
:=\frac{\left(\gcd(j,2)^2-2\right)\left(\gcd(j,2k+1)^2
-(2k+1)\right)+2(4k+2)}{4k+2}
\\\notag
&-24\left(n_1+\frac{\gcd(j,4k+2)^2}{4k+2}n_2
+\frac{\gcd(j,2)^2}{4}n_3(3n_3-1)
+\frac{\gcd(j,2k+1)^2}{4k+2}n_4(3n_4-1)\right).
\end{align}

\begin{thm}\label{exact}
Let $I_{2}(s)$ be the modified Bessel function of the first kind of order $2$ and \begin{equation}\label{defi-xk}
x_{k}(n):=\frac{\pi\sqrt{24n-(2k+2)}}{6}.
\end{equation}
For $k\geq 1$ and $n-\frac{k+1}{12}>0$, we have
\begin{align*}
\Delta_{k}(n)&=\frac{ \pi^3}{18 x^2_{k}(n)}\sum_{j=1}^{\infty} \frac{1}{j}
\sum_{\substack{n_1,n_2\geq 0 \\ n_3,n_4\in \mathbb{Z} \\r_{h,j}(n_1,n_2,n_3,n_4)\geq 0}}A_{j}(n,n_1,n_2,n_3,n_4)p_3(n_1)p(n_2)(-1)^{n_3+n_4}
\\
&\quad \times r_{h,j}(n_1,n_2,n_3,n_4) I_{2} \left( \frac{x_{k}(n)\sqrt{r_{h,j}(n_1,n_2,n_3,n_4)}}{j}  \right),
\end{align*}
where $p_k(n)$ denotes the number of $k$-colored partitions of $n$.
\end{thm}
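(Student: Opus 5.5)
The plan is the Rademacher circle method in the form used for eta-quotients of weight zero, as in the exact formulas of Zuckerman and of Sussman, and more recently of Chern for eta-quotients. Write $q=e^{2\pi i\tau}$ and
\[
F(q):=\sum_{n\ge0}\Delta_k(n)q^n=q^{\frac{k+1}{12}}\frac{\eta(2\tau)\eta((2k+1)\tau)}{\eta(\tau)^3\eta((4k+2)\tau)},
\]
where the exponent $\frac{k+1}{12}=\frac{-2-(2k+1)+3+(4k+2)}{24}$ comes from the $q^{1/24}$ factors. This is a weakly holomorphic modular function of weight $0$ for $\Gamma_0(4k+2)$. By Cauchy's formula,
\[
\Delta_k(n)=\int_{0}^{1}F\bigl(e^{-2\pi N^{-2}+2\pi i\theta}\bigr)e^{2\pi n N^{-2}-2\pi i n\theta}\,d\theta .
\]
We dissect $[0,1]$ into Farey arcs $\xi_{h,j}$ of order $N$. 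On each arc we put $z=j(N^{-2}-i\phi)$ with $q=\exp\bigl(\frac{2\pi i h}{j}-\frac{2\pi z}{j}\bigr)$.

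The first main step is the modular transformation on each arc. For each of the four factors $(q^m;q^m)_\infty$, with $m\in\{1,2,2k+1,4k+2\}$, we apply Dedekind's transformation $(q;q)_\infty\mapsto \omega_{h,j}\,z^{1/2}\exp\bigl(\frac{\pi}{12j}(z^{-1}-z)\bigr)(q_1;q_1)_\infty$. This is applied with $h,j$ replaced by $h\frac{m}{\gcd(j,m)}$ and $\frac{j}{\gcd(j,m)}$, and with $z$ scaled by $\frac{m}{\gcd(j,m)}$. The transformed nome is $q_1=\exp\bigl(\frac{2\pi i h_{j,m}}{j/\gcd(j,m)}-\frac{2\pi\gcd(j,m)^2}{mjz}\bigr)$, which is where $h_{j,m}$ comes from. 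Collecting the multipliers gives exactly $\Omega_k(h,j)$. The factors $z^{1/2}$ contribute, up to constants, $z^{-1/2\cdot 3}z^{1/2}z^{1/2}z^{-1/2}$, which should produce $z^{-1}$ times the constant $\sqrt{2(2k+1)}/(\sqrt{1}^{3}\sqrt{4k+2})$-type prefactors. The product of the exponentials gives $\exp\bigl(\frac{\pi}{12jz}c_{h,j}\bigr)$ with
\[
c_{h,j}=3-\frac{\gcd(j,2)^2}{2}-\frac{\gcd(j,2k+1)^2}{2k+1}+\frac{\gcd(j,4k+2)^2}{4k+2}.
\]
This equals the constant term of $r_{h,j}$, using $\gcd(j,4k+2)=\gcd(j,2)\gcd(j,2k+1)$; checking that identity is a routine step.

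The second step expands the transformed product. We write $1/(q_1;q_1)^3_\infty=\sum p_3(n_1)q_1^{n_1}$ and $1/(q_1^{(4k+2)\ldots})_\infty=\sum p(n_2)(\cdot)^{n_2}$. For the numerator factors we use Euler's pentagonal theorem $\sum_{n\in\mathbb Z}(-1)^nq^{n(3n-1)/2}$, giving the indices $n_3,n_4$ and the sign $(-1)^{n_3+n_4}$. The roots of unity $\exp\bigl(2\pi i h_{j,m}(\cdot)/(j/\gcd)\bigr)$ combine into $e^{\pi i b_{h,j}/j}$, and the real exponents combine into $\exp\bigl(\frac{\pi}{12jz}r_{h,j}\cdot\text{const}\bigr)$ after normalising by $24$. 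This matches the stated $r_{h,j}$.

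The third step splits the terms. Those with $r_{h,j}<0$ decay exponentially in $1/z$. Those with $r_{h,j}\ge0$ (finitely many for each $(h,j)$, since $n_1,n_2\ge0$ and the pentagonal exponents are nonnegative) are the principal part. For the principal part, the standard Rademacher manipulations apply: extend the arcs to the full circle integral $\frac{1}{2\pi i}\int_{\Re z=\text{const}} z^{-1}\exp\bigl(\frac{A}{z}+Bz\bigr)dz$ and evaluate it via the Bessel integral. Because the weight is $0$ but the $z$-power from the four $\eta$'s is $z^{-1}$, this yields $I_2$ rather than $I_1$, with factor $(A/B)^{1}$. That factor gives $r_{h,j}\,\pi^3/(18x_k(n)^2)$ after substituting $B=\frac{2\pi}{j}\bigl(n-\frac{k+1}{12}\bigr)$ and $A=\frac{\pi r_{h,j}}{12j}$, and the argument becomes $2\sqrt{AB}=x_k(n)\sqrt{r_{h,j}}/j$. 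The sum over $h$ then assembles into $A_j(n,\ldots)$.

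The main obstacle is the error analysis: showing that the non-principal terms, and the pieces lost when completing the arcs, tend to $0$ as $N\to\infty$. This requires bounding the full transformed product uniformly on each arc, using $\Re(1/z)\ge j/2$ on Farey arcs together with the standard Kloosterman-free estimate that summing over $h$ gives $O(j)$ per $j$. The $z^{-1}$ growth combined with arc length $\ll 1/(jN)$ then gives total error $\ll N^{-1}\cdot\text{polylog}$ or better, which is comparable to Rademacher's treatment of $p(n)$, and letting $N\to\infty$ gives the exact formula. I would first verify the negative-$r$ tail converges absolutely by dominating with $\sum p_3(n_1)p(n_2)|q_1|^{\cdots}$ evaluated at $\Re(1/z)\ge j/2$.
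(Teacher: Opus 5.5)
Your strategy is the mechanism behind the paper's proof. The paper simply quotes Zuckerman's theorem, Theorem~\ref{zuc-asym}, with $\kappa=-1$ instead of redoing the Farey-arc analysis. Your multiplier $\Omega_k(h,j)$, your constant $c_{h,j}=r_{h,j}(0,0,0,0)$, and your expansion via $p_3$, $p$ and the pentagonal theorem all match the paper.

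\textbf{The gap is the automorphy factor.} The function $f_k(\tau)=\eta(2\tau)\eta((2k+1)\tau)/(\eta(\tau)^3\eta((4k+2)\tau))$ has two eta factors over four, so it has weight $-1$, not $0$. The Dedekind formula you quote is the one for $1/(q;q)_\infty$. For $(q;q)_\infty$ itself, $(q;q)_\infty=\omega_{h,j}^{-1}z^{-1/2}\exp\bigl(\frac{\pi}{12j}(z-z^{-1})\bigr)(q_1;q_1)_\infty$. Applied consistently, your rule would give $\Omega_k(h,j)^{-1}$ and $\exp\bigl(-\frac{\pi c_{h,j}}{12jz}\bigr)$, i.e.\ no principal part at all. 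You used the correct signs for $\Omega_k$ and $c_{h,j}$ but kept the inverted power of $z$. The correct net factor is $z^{+1}$, as in the paper's $f_k\bigl(\frac1j(h+iz)\bigr)=\Omega_k(h,j)\,z\cdots$.

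This breaks exactly the step that is supposed to produce the formula. With your integrand, $\frac{1}{2\pi i}\int z^{-1}e^{A/z+Bz}\,dz=I_0(2\sqrt{AB})$, not $\frac AB I_2(2\sqrt{AB})$. The $I_2$ and the factor $r_{h,j}$ come from $\frac{1}{2\pi i}\int z\,e^{A/z+Bz}\,dz=\frac AB I_2(2\sqrt{AB})$. This is what Zuckerman's $I_{1-\kappa}$, $(n+\lambda)^{(\kappa-1)/2}$ and $(|m+\lambda_{h,j}|/c_j)^{(1-\kappa)/2}$ reduce to at $\kappa=-1$. Your final formula is right only because you asserted the Bessel output rather than computing it from your own integrand.

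The error analysis hinges on the same point. With $z^{-1}$, $|z|^{-1}$ reaches $N^2/j$ on the arcs, and the trivial bound summed over all Farey arcs grows with $N$. With $z^{+1}$ one has $|z|\le\sqrt2/N$ on every arc, so the non-principal contribution is $O(1/N)$ with no cancellation needed; that is the estimate you had in mind.

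Finally, with $z^{+1}$ the completed integral over a whole line $\operatorname{Re} z=\text{const}$ does not converge, since the integrand grows like $|z|$. You must complete on Rademacher's circle $|z-\tfrac12|=\tfrac12$ and substitute $w=1/z$, which turns the integrand into $w^{-3}e^{Aw+B/w}$. With these corrections your argument becomes a self-contained proof of the special case of Theorem~\ref{zuc-asym} that the paper invokes.
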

According to the exact formula stated in Theorem \ref{exact}, by extracting the main term and bounding the error term, we derive the following asymptotic formula of $\Delta_{k}(n)$.

\begin{thm}\label{asympform}
Let $\alpha_k:=\frac{5k+2}{2k+1}$ and
$\beta_k:=\frac{5k-10}{2k+1}$.
For $k\geq 3$ and $n-\frac{k+1}{12}>0$, we have
\begin{equation*}
\Delta_{k}(n)=M_k(n) +R_{k}(n),
\end{equation*}
where
\begin{align}\label{eq:M-R}
M_k(n):=\frac{ \pi^3\alpha_k}{18 {x^2_{k}(n)}} I_2\left(\sqrt{\alpha_k } {x_{k}(n)}\right),
~~
|R_{k}(n)|\leq
12k^3 e^{\pi\sqrt{k/3}}x_{k}^{-\frac{3}{2}}(n)
\exp\left( \sqrt{\beta_k}x_{k}(n) \right).
\end{align}
\end{thm}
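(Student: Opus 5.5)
Starting from the exact formula of Theorem~\ref{exact}, I will isolate the single term that produces $M_k(n)$ and show that everything else is bounded by a sum of Bessel functions whose arguments are at most $\sqrt{\beta_k}\,x_k(n)$.

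\textbf{Identifying the main term.} The main term should come from $j=1$, $n_1=n_2=n_3=n_4=0$. For $j=1$ we have $\gcd(1,\cdot)=1$, so the constant part of \eqref{def:r} is
\[
\frac{(1-2)(1-(2k+1))+2(4k+2)}{4k+2}=\frac{2k+8k+4}{4k+2}=\frac{5k+2}{2k+1}=\alpha_k .
\]
Also $A_1=\Omega_k(0,1)=1$. Hence this term equals exactly $\frac{\pi^3}{18x_k^2}\,\alpha_k I_2(\sqrt{\alpha_k}\,x_k)=M_k(n)$. All remaining terms make up $R_k(n)$.

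\textbf{Largest remaining exponent.} I will check that every other admissible $(j,n_1,\dots,n_4)$ satisfies $r_{h,j}/j^2\le\beta_k$. For $j=1$, the next candidates are:
\begin{itemize}
\item $n_1=1$, giving $\alpha_k-24<0$, so it is excluded;
\item $n_3=\pm1$ or $n_4=\pm1$, which subtract $12$ or $\frac{48}{4k+2}$ or more;
\item $n_2=1$, which subtracts $\frac{24}{4k+2}$, giving $\frac{5k+2-12}{2k+1}=\frac{5k-10}{2k+1}=\beta_k$.
\end{itemize}
So $\beta_k$ is the second largest value, attained at $j=1$, $n_2=1$. For $j\ge2$ I will bound the constant term of $r_{h,j}$ by $\frac{(g_2^2-2)(g_{2k+1}^2-(2k+1))+8k+4}{4k+2}$ and divide by $j^2$. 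This needs a case analysis on $\gcd(j,2)$ and $\gcd(j,2k+1)$. The worst case is $j=2k+1$ or $j=4k+2$, where the value is of order $(2k+1)^2/(4k+2)\cdot 1/j^2$, i.e. $O(1/k)$. That is below $\beta_k$ once $k\ge3$, which explains the restriction $k\ge3$ (for small $k$, $\beta_k$ is negative or tiny). I will verify $\frac{c(j)}{j^2}\le\beta_k$ in each gcd case.

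\textbf{Bounding the error.} I will use $|A_j|\le j$, since each $|\Omega_k|=1$ and there are at most $j$ values of $h$, together with a standard bound such as $I_2(s)\le \sqrt{\tfrac{2}{\pi s}}\,e^{s}\cdot C$, or $I_2(s)\le e^s$ combined with small-argument bounds $I_2(s)\le s^2e^s/8$. Each error term is then bounded by a constant times $x_k^{-2}\,r\,(x_k\sqrt r/j)^{-1/2}\exp(x_k\sqrt{\beta_k})$, which yields the factor $x_k^{-3/2}$. The set of admissible $(n_1,\dots,n_4)$ is finite, since $r\ge0$ forces $n_1\le\alpha_k/24$ (so $n_1=0$), $n_2\le(5k+2)/24$, and $|n_3|,|n_4|\le O(\sqrt k)$. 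I will then bound $\sum p(n_2)\le$ a sum up to $(5k+2)/24$ using $p(m)\le e^{\pi\sqrt{2m/3}}$, which gives $e^{\pi\sqrt{k/3}}$-type growth since $\pi\sqrt{2\cdot\frac{5k}{24}\cdot\frac23}\approx \pi\sqrt{5k/18}<\pi\sqrt{k/3}$. The sum over $j$ is not finite, but $r_{h,j}\ge0$ requires the constant term $\ge0$, and that constant is bounded independently of $j$. So the terms with $j$ large still enter with argument $x_k\sqrt r/j$, and their total must be controlled. I will split into:
\begin{itemize}
\item $j\le 4k+2$: bounded crudely, giving $k$ powers;
\item $j>4k+2$: there $I_2(s)\le s^2 e^{s}/8$ with $s\le x_k C/j$, and since $\sum_j j\cdot j^{-1}\cdot j^{-2}$ converges, this part is $O(e^{x_kC'/j})$ and is absorbed, with the exponent at most $\sqrt{\beta_k}x_k$ after using the case bound.
\end{itemize}
Collecting the counts ($O(k)$ values of $n_2$, $O(\sqrt k)$ each for $n_3,n_4$, $r\le\alpha_k<3$, and $j$ up to $4k+2$) should give the constant $12k^3e^{\pi\sqrt{k/3}}$.

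\textbf{Main obstacle.} The hardest part will be the uniform case analysis showing $r_{h,j}/j^2\le\beta_k$ for all $j\ge2$, while keeping the explicit constant $12k^3$ honest, including the infinite $j$-tail.
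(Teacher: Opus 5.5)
Your outline matches the paper's. The main term is the $j=1$, $n_1=\dots=n_4=0$ term with $A_1=1$, the next exponent is $\beta_k$ from $j=1$, $n_2=1$, and the rest is bounded using $|A_j|\le j$ and Bessel bounds. The gap is that the statement is an \emph{explicit} bound, and you only assert that the count ``should give'' $12k^3e^{\pi\sqrt{k/3}}$. Several inputs to that count are also wrong:
\begin{itemize}
\item For even $j$ the constant part of $r_{h,j}$ is $1+\gcd(j,2k+1)^2/(2k+1)$, which equals $2k+2$ when $(4k+2)\mid j$. So $r$ is not bounded by $\alpha_k<3$; only $r/j^2$ is small.
\item For such $j$, terms with $n_1\ge1$ (once $k\ge 11$) and $n_3\neq 0$ (once $k\ge 23$) are admissible. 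They carry weights $p_3(n_1)$ with $n_1\le\frac{k+1}{12}$. Your claim that $r\ge0$ forces $n_1=0$ holds for odd $j$ but fails for these even $j$.
\item $n_2$ ranges up to $\frac{5k+2}{12}$, not $\frac{5k+2}{24}$.
\item Your $j\ge2$ analysis (repairable, since the inequality you want is true) misidentifies the binding cases. They are $j=2$, with $r/j^2\le\frac{k+1}{4k+2}$, and $j=3$, with $r/j^2\le\frac{\alpha_k}{9}$, both of order $1$, not $O(1/k)$. The conditions $\frac{k+1}{4k+2}\le\beta_k$ and $\frac{\alpha_k}{9}\le\beta_k$ are where $k\ge3$ enters besides $\beta_k>0$. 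The case $j=2k+1$ contributes nothing, since its constant term is $2-k<0$.
\end{itemize}

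The missing piece is the bookkeeping that actually yields $12k^3$. In your scheme, each admissible term with $2\le j\le 4k+2$ has size about $r^{3/4}\sqrt{j}\,x_k^{-5/2}e^{\sqrt{\beta_k}x_k}$. The $j$-range alone therefore contributes $\sum_{j\le 4k+2}\sqrt{j}\asymp k^{3/2}$. Combined with $r$ up to $2k+2$ and the $(n_1,n_3)$ and $(n_2,n_4)$ counts, it is not evident you stay below $12k^3$. Also, the tail $j>4k+2$ bounded via $I_2(s)\le s^2e^s/8$ has no decay in $x_k$, so it must be absorbed through the exponential gap with its own constant.

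The paper organises the sum differently:
\begin{itemize}
\item It separates even and odd $j$.
\item It bounds all $j$ up to $\asymp x_k\sqrt{k}$ (resp.\ $\asymp x_k$) by the single largest Bessel value, with arguments $\frac{x_k}{2}\sqrt{\frac{2k+2}{2k+1}}$ and $\frac{x_k}{3}\sqrt{\alpha_k}$, and bounds the remaining tail by $I_2(s)\le s^2/8$.
\item This yields the factors $\sqrt{(k+1)x_k}$ and $2\sqrt{x_k}$, which is where $x_k^{-3/2}$ comes from. Only then are the exponents replaced by $\sqrt{\beta_k}x_k$.
\item It finishes with $p_3(m)\le e^{\pi\sqrt{2m}}$, $p(m)\le e^{\pi\sqrt{2m/3}}$, $\frac{k+1}{12}\le\frac{k}{9}$, $\frac{5k+2}{12}\le\frac{k}{2}$, and explicit numerics.
\end{itemize}
Lemma~\ref{lem:relerr} and the threshold $n_k$ depend on this specific constant, so that computation is the substance of the theorem, and your proposal does not supply it.
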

In view of Theorem \ref{asympform} and  \cite[Theorem 1.2]{Dong-Ji-Jia-2023}, we prove the following two theorems by using the general results of Griffin, Ono, Rolen and Zagier \cite{Griffin-Ono-Rolen-Zagier-2019}, and Wang and Yang \cite{Wang-Yang-2024}.
\begin{thm}\label{thm:ineq-any-order}
For $k\ge1$ and sufficiently large $n$, $\Delta_k(n)$ satisfies the Tur\'an and Laguerre inequalities of any order.
\end{thm}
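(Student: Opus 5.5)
We reduce the statement to the general criteria of Griffin, Ono, Rolen and Zagier \cite{Griffin-Ono-Rolen-Zagier-2019} for the Tur\'an inequalities and of Wang and Yang \cite{Wang-Yang-2024} for the Laguerre inequalities. Both criteria only need an asymptotic expansion of the shifted ratios. Concretely, we need real sequences $A(n)$ and $\delta(n)\to0$ such that, for each fixed $d$,
\[
\log\frac{\Delta_k(n+j)}{\Delta_k(n)}=A(n)j-\delta(n)^2j^2+\sum_{i=3}^{d}g_i(n)j^i+o(\delta(n)^d),\qquad g_i(n)=o(\delta(n)^i),
\]
uniformly for $0\le j\le d$. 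Given this, Griffin--Ono--Rolen--Zagier show that the renormalized Jensen polynomials converge to Hermite polynomials, which have only real roots. Hence the Jensen polynomials $J^{d,n-1}_{\Delta_k}(X)$ are hyperbolic for all large $n$, which is the order-$d$ Tur\'an inequality. Wang and Yang derive the Laguerre inequalities of every order from the same type of expansion. When $k=1,2$, the expansion comes from the asymptotic formula of \cite[Theorem 1.2]{Dong-Ji-Jia-2023}. When $k\ge3$, it comes from Theorem \ref{asympform}. In both cases $\Delta_k(n)$ has the same shape: a constant times $x^{-2}I_2(\sqrt{c}\,x)$ with $x=x_k(n)$, plus an exponentially smaller error.

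\textbf{Step 1: isolate the main term.} For $k\ge3$ we have $\beta_k<\alpha_k$. Using $I_2(s)\sim e^s/\sqrt{2\pi s}$, Theorem \ref{asympform} gives $R_k(n)/M_k(n)=O\bigl(x^{1}e^{-(\sqrt{\alpha_k}-\sqrt{\beta_k})x}\bigr)$. This is smaller than any power of $n$. Therefore $\log\Delta_k(n+j)=\log M_k(n+j)+O(e^{-cx_k(n)})$ uniformly for $0\le j\le d$, and the error term can be absorbed into the $o(\delta(n)^d)$ remainder. We only need that $\delta(n)$ decays polynomially.

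\textbf{Step 2: expand the main term.} Next we expand $\log M_k(n+j)$ in $j$, using the asymptotic series
\[
\log I_2(s)=s-\tfrac12\log(2\pi s)+\sum_{m\ge1}c_m s^{-m}.
\]
Write $x_k(n+j)=\frac{\pi}{6}\sqrt{24n-(2k+2)+24j}$ and Taylor expand in $j/n$. The leading term is $\sqrt{\alpha_k}\,x_k(n+j)$. Its quadratic coefficient is $-\sqrt{\alpha_k}\,\pi\cdot 24^2/(6\cdot 8\,(24n)^{3/2})$, which is of order $n^{-3/2}$, so $\delta(n)\asymp n^{-3/4}$. For $i\ge3$, the coefficient of $j^i$ is $O(n^{1/2-i})$, which is $o(n^{-3i/4})$ exactly when $i>2$. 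The logarithmic and $s^{-m}$ corrections contribute only $O(n^{-i})$ to each $j^i$ coefficient, and this is also $o(\delta^i)$ for $i\ge3$. For $i=2$, these corrections change $\delta(n)^2$ only by a relative $o(1)$, so we redefine $\delta(n)$ to include them. This is the standard computation for $p(n)$ carried out in \cite{Griffin-Ono-Rolen-Zagier-2019}.

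\textbf{Main obstacle.} Theorem \ref{asympform} has a fixed exponential error $R_k(n)$ rather than an expansion to arbitrary precision. So the key point is to verify that the relative error is uniformly tiny across the shifts $n,\dots,n+2m$. This holds because the factor $e^{-(\sqrt{\alpha_k}-\sqrt{\beta_k})x_k(n)}$ beats every power of $n$, and the shifts change $x_k$ only by $O(n^{-1/2})$. With this in hand, both criteria apply for every $d$ and every order $m$, which proves Theorem \ref{thm:ineq-any-order}.
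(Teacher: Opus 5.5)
Your proposal is correct and follows essentially the same route as the paper. Both arguments reduce the statement to the Griffin--Ono--Rolen--Zagier and Wang--Yang criteria by expanding $\log\bigl(\Delta_k(n+j)/\Delta_k(n)\bigr)$ from the main term of Theorem~\ref{asympform} (for $k\ge3$) or of Dong--Ji--Jia (for $k=1,2$), with $\delta(n)\asymp x_k(n)^{-3/2}\asymp n^{-3/4}$. Your version is in fact more careful than the paper's, which uses only the leading asymptotic $I_2(z)\sim e^z/\sqrt{2\pi z}$ and a ``$\sim$'' expansion. You instead absorb the exponentially small relative error explicitly, keep the Bessel corrections, and allow the $g_i(n)j^i$ terms (with $g_i=o(\delta^i)$) that the original criterion permits but the paper's stated version omits.
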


\begin{thm}\label{thm:ineq-any-order-d}
For $k\ge1$ and sufficiently large $n$, we have
\begin{equation*}
\Delta^r \left(\Delta_k(n)\right)>0.
\end{equation*}
\end{thm}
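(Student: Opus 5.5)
We prove Theorem \ref{thm:ineq-any-order-d} by reducing $\Delta^r(\Delta_k(n))$ to the same finite difference of the main term, and then showing that the main term's differences are positive and much larger than the error. For $k\ge 3$ we use Theorem \ref{asympform}. For $k=1,2$ we use the analogous formula of \cite[Theorem 1.2]{Dong-Ji-Jia-2023}, which has the same shape: a main term $c\,x^{-2}I_2(\sqrt{\alpha}x)$ plus an error that is exponentially smaller, of order $e^{\sqrt{\beta}x}$ with $\beta<\alpha$.

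\textbf{Step 1: split into main and error parts.} Write
\[
\Delta^r(\Delta_k(n))=\Delta^r(M_k(n))+\Delta^r(R_k(n)).
\]
The error part is bounded trivially:
\[
|\Delta^r(R_k(n))|\le 2^r\max_{0\le i\le r}|R_k(n+i)|\ll_{k,r} x_k(n)^{-3/2}e^{\sqrt{\beta_k}\,x_k(n+r)}.
\]
Since $x_k(n+r)-x_k(n)=O(n^{-1/2})$, this is $\ll x_k(n)^{-3/2}e^{\sqrt{\beta_k}\,x_k(n)}$.

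\textbf{Step 2: the main term.} Regard $M_k$ as a smooth function of a real variable,
\[
f(t)=\frac{\pi^3\alpha_k}{18\,x(t)^2}\,I_2\bigl(\sqrt{\alpha_k}\,x(t)\bigr),\qquad x(t)=\frac{\pi}{6}\sqrt{24t-(2k+2)}.
\]
By the mean value theorem for finite differences, $\Delta^r f(n)=f^{(r)}(\xi)$ for some $\xi\in(n,n+r)$. It therefore suffices to show that $f^{(r)}(t)>0$ for large $t$ and to find its order of growth. Use the expansion
\[
I_2(y)=\frac{e^y}{\sqrt{2\pi y}}\Bigl(1-\frac{15}{8y}+O(y^{-2})\Bigr),
\]
valid together with its derivatives, i.e.\ the full asymptotic series may be differentiated termwise. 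This gives $f(t)=e^{c\sqrt{t}}\,t^{-5/4}\,g(t)$, where $c=\pi\sqrt{2\alpha_k/3}$ and $g$ has an asymptotic expansion in powers of $t^{-1/2}$.

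Each differentiation brings down the leading factor $\frac{c}{2}t^{-1/2}$ from the exponential, while the other contributions are smaller by a further factor $t^{-1/2}$. Hence
\[
f^{(r)}(t)\sim \Bigl(\frac{c}{2}\Bigr)^r t^{-r/2}f(t)>0 .
\]
Alternatively, one can check directly that the $r$-th derivative of $e^{c\sqrt t}\phi(t)$, with $\phi$ of power growth and derivatives of controlled size, has this leading behaviour. This is the same computation Gupta used for $p(n)$ and Wang--Xie--Zhang used for overpartitions. Consequently
\[
\Delta^r(M_k(n))\gg_r n^{-r/2-5/4}e^{\sqrt{\alpha_k}\,x_k(n)}.
\]

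\textbf{Step 3: compare and conclude.} Since $\alpha_k>\beta_k$, the ratio of the error bound to the main bound is
\[
\frac{x_k(n)^{-3/2}e^{\sqrt{\beta_k}x_k(n)}}{n^{-r/2-5/4}e^{\sqrt{\alpha_k}x_k(n)}}\ll n^{r/2+1/2}\,e^{-(\sqrt{\alpha_k}-\sqrt{\beta_k})x_k(n)}\to 0 .
\]
So $\Delta^r(\Delta_k(n))>0$ for all sufficiently large $n$, and the threshold may depend on $r$ and $k$, as asymptotic complete monotonicity allows.

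The main obstacle is making Step 2 rigorous: we need the positivity of $f^{(r)}$ and its lower bound, including control of all lower-order terms from differentiating the Bessel asymptotic. I would handle this by avoiding expansions of derivatives altogether. Instead I would use the integral representation
\[
I_2(y)=\frac{(y/2)^2}{\sqrt{\pi}\,\Gamma(5/2)}\int_{-1}^1(1-s^2)^{3/2}e^{ys}\,ds .
\]
This makes $x^{-2}I_2(\sqrt{\alpha}x)$ equal to a positive constant times $\int_{-1}^1(1-s^2)^{3/2}e^{\sqrt{\alpha}\,x(t)\,s}\,ds$. We can then differentiate under the integral in $t$. The part of the integral over $s\ge 1/2$ dominates, and each derivative of $e^{a s\sqrt{24t-(2k+2)}}$ with $s>0$ is a positive leading term plus terms smaller by a factor $t^{-1/2}$. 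The part over $s<1/2$ is exponentially smaller. A cruder alternative is to write the main term as $Ce^{y}y^{-5/2}(1+O(1/y))$ with an explicit error and differentiate carefully; the integral route seems cleaner.
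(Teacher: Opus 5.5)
Your proof is correct, but it takes a different route from the paper. The paper never splits $\Delta^r(\Delta_k(n))$ into main and error parts. It uses Theorem~\ref{asympform} (and \cite[Theorem 1.2]{Dong-Ji-Jia-2023} for $k=1,2$) only to produce the expansion \eqref{asym:log-delta} of $\log\bigl(\Delta_k(n+j)/\Delta_k(n)\bigr)$ in powers of $j$. It then reads off $A_1(n)\asymp x_k(n)^{-1}$ and $A_m(n)\asymp x_k(n)^{-(2m-1)}=o\bigl(x_k(n)^{-m}\bigr)$ for $m\ge2$, and applies the Wang--Yang criterion (Theorem~\ref{thm:wy-d}) as a black box.

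You argue additively instead. The difference $\Delta^r(R_k(n))$ is at most $2^r$ times an exponentially smaller quantity. The main part satisfies $\Delta^r(M_k(n))=f^{(r)}(\xi)$ by the mean value theorem for finite differences, and the sign and size of $f^{(r)}$ come from the Poisson integral. That integral is what makes your Step~2 rigorous. Put $u=t-\frac{k+1}{12}$ and $b=\pi s\sqrt{2\alpha_k/3}$, so that $\sqrt{\alpha_k}\,x(t)\,s=b\sqrt u$. Then $\frac{d^r}{du^r}e^{b\sqrt u}=e^{b\sqrt u}\sum_{j=1}^r c_{r,j}\,b^j u^{j/2-r}$ with $c_{r,r}=2^{-r}$. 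Hence for $s\ge\frac12$ the differentiated integrand is positive and equals $(b/2)^r u^{-r/2}e^{b\sqrt u}\bigl(1+O((b\sqrt u)^{-1})\bigr)$ uniformly in $s$. The range $s<\frac12$ contributes at most a power of $u$ times $e^{\frac12\sqrt{\alpha_k}\,x(t)}$.

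The paper's route is shorter, and the single expansion \eqref{asym:log-delta} serves the Tur\'an, Laguerre and monotonicity results at once. Your route has several advantages:
\begin{itemize}
\item It is self-contained.
\item It avoids having to justify an expansion of the logarithm with remainder $o(\delta^{r}(n))$. The paper passes from $\sim$ to \eqref{asym:log-delta} rather informally, dropping the $O(1/y)$ Bessel corrections; these happen not to affect the orders that matter.
\item It gives the sharper statement $\Delta^r(\Delta_k(n))\sim\bigl(\pi\sqrt{\alpha_k/6}\bigr)^r n^{-r/2}\Delta_k(n)$, consistent with the paper's $A_1(n)$.
\item Every estimate can be made explicit, so it could yield an effective threshold $N(r)$.
\end{itemize}
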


We also determine the threshold
for the (second order) Tur\'an inequality with the aid of Theorem \ref{asympform}.
\begin{thm}\label{thm:main}
For $k\ge3$ and $
n\ge \max\left\{\left\lceil
8k^{3}+\frac{k+1}{12}\right\rceil,526\right\}
$, the (second order) Tur\'an inequality holds:
\[
\Delta_k^2(n)\ \ge\ \Delta_k(n-1)\,\Delta_k(n+1).
\]
\end{thm}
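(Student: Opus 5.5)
We derive the Tur\'an inequality from Theorem \ref{asympform} by reducing it to an inequality for the main term $M_k(n)$ and then absorbing the error $R_k(n)$. We write $x=x_k(n)$ and $x_\pm=x_k(n\pm1)$, so that $x_\pm^2=x^2\pm\frac{2\pi^2}{3}$. The first step is to replace $I_2$ by an explicit two-sided approximation. We use the asymptotic expansion $I_2(s)=\frac{e^s}{\sqrt{2\pi s}}\bigl(1-\frac{15}{8s}+\frac{105}{128 s^2}+E(s)\bigr)$, where $|E(s)|\le C s^{-3}$ holds for $s$ beyond an explicit bound; such bounds are standard and were used by Dong, Ji and Jia. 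This gives explicit upper and lower bounds
\[
M_k(n)=\frac{\pi^3\alpha_k^{3/4}}{18\sqrt{2\pi}}\,x^{-5/2}e^{\sqrt{\alpha_k}x}\Bigl(1-\tfrac{15}{8\sqrt{\alpha_k}x}+O(x^{-2})\Bigr).
\]
Combining these with Theorem \ref{asympform}, we obtain bounds of the form $\Delta_k(m)=\widehat M(x_m)\bigl(1+\varepsilon(m)\bigr)$, where $\widehat M$ is the leading factor above. The error satisfies
\[
|\varepsilon(m)|\le \frac{15}{8\sqrt{\alpha_k}x_m}+\frac{c}{x_m^{2}}+c'k^3e^{\pi\sqrt{k/3}}\,x_m\,e^{-(\sqrt{\alpha_k}-\sqrt{\beta_k})x_m}.
\]
Here $\sqrt{\alpha_k}-\sqrt{\beta_k}\ge \frac{12}{(2k+1)(\sqrt{\alpha_k}+\sqrt{\beta_k})}$, which is of order $1/k$.

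The second step handles the log-concavity of the main term. We set $f(x)=-\frac52\log x+\sqrt{\alpha_k}x+\log(1-\frac{15}{8\sqrt{\alpha_k}x}+\cdots)$ and view it as a function of $n$ through $x=x_k(n)$. It suffices to show $2f(x)-f(x_+)-f(x_-)\ge \delta(n)$ with an explicit positive lower bound $\delta(n)$ of size roughly $c_0/x^3$, which comes from the concavity of $\sqrt{\alpha_k}\,x_k(n)$ in $n$. Its second difference is about $-\sqrt{\alpha_k}\,\frac{\pi^4\cdot 16}{9}\cdot\frac{1}{4x^3}$, that is, of order $\sqrt{\alpha_k}\pi^4x^{-3}$ up to constants. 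The $\log x$ term and the correction terms contribute only $O(x^{-4})$. This is a Taylor-expansion estimate with explicit remainders and is routine.

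The last step, which we expect to be the main obstacle, is to check that the error terms do not destroy this margin. We need
\[
\log\frac{(1+\varepsilon(n))^2}{(1+\varepsilon(n+1))(1+\varepsilon(n-1))}\ge -\delta(n).
\]
The polynomial parts of $\varepsilon$ are smooth in $x$, so we keep them inside $f$ rather than bounding them crudely; their second difference is $O(x^{-4})$. Only the exponentially small remainder $k^3e^{\pi\sqrt{k/3}}x\,e^{-(\sqrt{\alpha_k}-\sqrt{\beta_k})x}$ has to be beaten by $x^{-3}$. For $n\ge 8k^3+\frac{k+1}{12}$ we have $x\ge \pi\sqrt{192k^3-2k}/6\approx 7.25k^{3/2}$, so the exponent $(\sqrt{\alpha_k}-\sqrt{\beta_k})x$ is at least of order $k^{1/2}$ times a sizable constant. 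We would then verify that $4k^3e^{\pi\sqrt{k/3}}x^4e^{-(\sqrt{\alpha_k}-\sqrt{\beta_k})x}<\delta$-margin by showing the relevant function is decreasing in $x$ and checking it at the threshold. The constant $526$ handles the small $k$, where $8k^3$ is too small for these explicit inequalities to close. If a residual finite range remains for small $k$ such as $k=3,4$, we would close it by direct numerical computation from the generating function \eqref{genef}.
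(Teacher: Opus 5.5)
Your plan is essentially the paper's proof: split $\Delta_k=M_k+R_k$ via Theorem \ref{asympform}, get a log-concavity margin of order $x_k^{-3}(n)$ for $M_k$ from the concavity of $\sqrt{\alpha_k}\,x_k(n)$, absorb the relative error $|R_k|/M_k\le C_k x_k(n)e^{-\delta_k x_k(n)}$, and reduce to an inequality of the form $e^{\delta_k x_k(n)}\ge A_k x_k^4(n)$, which is monotone in $n$ and is checked at $n_0(k)=8k^3+\frac{k+1}{12}$ (analytically for large $k$, by computation for small $k$). Two details need fixing when you make the constants explicit. First, since $(x_\pm^2-x^2)^2=\frac{4\pi^4}{9}$, the margin is about $\frac{\sqrt{\alpha_k}\pi^4}{9x^3}$, a quarter of what you wrote. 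Second, a Bessel remainder known only to satisfy $|E(s)|\le Cs^{-3}$ contributes $O(x^{-3})$ rather than $O(x^{-4})$ to the second difference, so you must either compare its constant against the margin or control the derivatives of $\log I_2$, as the paper does using $I_2'=I_3+\frac{2}{t}I_2$ and the Riccati identity.
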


As noted by Asai, Kubo and Kuo \cite{Asai-Kubo-Kuo-00} and Sagan \cite{Sagan-1988}, we see that Theorem \ref{thm:main} implies the following multiplicative properties of $\Delta_{k}(n)$.

\begin{cor}\label{thm-Delta-2}
For $k\geq 3$ and   $a,\,b\ge \max\left\{\left\lceil
8k^{3}+\frac{k+1}{12}\right\rceil,526\right\}
$,
\begin{equation*}
\Delta_{k}(a)\Delta_{k}(b)\ge\Delta_{k}(a+b).
\end{equation*}
\end{cor}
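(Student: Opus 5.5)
The corollary follows from Theorem \ref{thm:main} by the standard argument of Asai, Kubo and Kuo and of Sagan: log-concavity of a positive sequence on a tail gives monotone ratios there, and monotone ratios give this multiplicative inequality. Write $N:=\max\left\{\left\lceil 8k^{3}+\frac{k+1}{12}\right\rceil,526\right\}$. We first check that $\Delta_k(n)>0$ for all $n\ge 0$. This holds because the generating function \eqref{genef} can be written as $\prod_n \frac{1}{(1-q^n)^2}\cdot\frac{1+q^n}{1+q^{(2k+1)n}}$. More simply, positivity on the relevant range also follows from Theorem \ref{asympform}, since $M_k(n)$ dominates $R_k(n)$ there. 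Alternatively one can note combinatorially that $\Delta_k(n)\ge 1$.

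Step 1: monotone ratios. By Theorem \ref{thm:main}, $\Delta_k(n)^2\ge\Delta_k(n-1)\Delta_k(n+1)$ for $n\ge N$. Dividing by $\Delta_k(n)\Delta_k(n-1)>0$ shows that the ratio $\rho(n):=\Delta_k(n+1)/\Delta_k(n)$ is non-increasing for $n\ge N-1$.

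Step 2: telescoping. Without loss of generality let $a\le b$. Write
\[
\frac{\Delta_k(a+b)}{\Delta_k(b)}=\prod_{i=0}^{a-1}\rho(b+i),\qquad \Delta_k(a)=\Delta_k(0)\prod_{i=0}^{a-1}\rho(i).
\]
Since $\Delta_k(0)=1$, it suffices to show $\prod_{i=0}^{a-1}\rho(b+i)\le\prod_{i=0}^{a-1}\rho(i)$. For the indices $i\ge N-1$ this follows termwise from monotonicity, because $b+i\ge i\ge N-1$. The difficulty is the small indices $i<N-1$, where Theorem \ref{thm:main} gives no information about $\rho(i)$.

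To avoid the small indices, split the product at $N$. Set $\Delta_k(a)=\Delta_k(N)\prod_{i=N}^{a-1}\rho(i)$. Compare $\prod_{i=N}^{a-1}\rho(i)$ with $\prod_{i=b+N}^{a+b-1}\rho(i)$; both have $a-N$ factors, and since $b\ge 0$ monotonicity gives termwise $\rho(b+i)\le\rho(i)$. What remains is to show
\[
\Delta_k(N)\ \ge\ \frac{\Delta_k(b+N)}{\Delta_k(b)}=\prod_{i=b}^{b+N-1}\rho(i).
\]
Because $b\ge N$, monotonicity gives $\rho(i)\le\rho(N)$ for each factor, so the right side is at most $\rho(N)^N$. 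Hence it suffices to check the single explicit inequality $\Delta_k(N)\ge\rho(N)^N$.

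This last check is the main obstacle. Since $\log\Delta_k(n)\sim\sqrt{\alpha_k}\,x_k(n)\asymp\sqrt n$ and $\log\rho(N)\approx c/\sqrt N$, we get $N\log\rho(N)\approx \frac{1}{2}\sqrt{\alpha_k}\,x_k(N)$. This is roughly half of $\log\Delta_k(N)$, so the inequality should hold with a margin. To prove it, I would use Theorem \ref{asympform} with the explicit error bound, together with the standard bounds $\frac{e^x}{\sqrt{2\pi x}}(1-\frac{15}{8x})\le I_2(x)\le\frac{e^x}{\sqrt{2\pi x}}$. From these I would derive an explicit upper bound on $\rho(N)$ of the form $\rho(N)\le\exp\!\bigl(\sqrt{\alpha_k}(x_k(N+1)-x_k(N))\bigr)(1+\epsilon)$ and an explicit lower bound on $\Delta_k(N)$. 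The factor $\frac12$ advantage in the exponent then absorbs the polynomial prefactors, and the thresholds $N\ge 526$ and $N\ge 8k^3$ ensure that the error term $R_k$ is negligible. Carrying out this comparison would establish the claim for all $a,b\ge N$.
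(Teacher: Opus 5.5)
Your reduction is correct, and it is more careful than the paper, which deduces the corollary in one line from Theorem~\ref{thm:main} by citing Asai--Kubo--Kuo and Sagan. That classical argument, $\Delta_k(a)\Delta_k(b)\ge\Delta_k(a-1)\Delta_k(b+1)\ge\cdots\ge\Delta_k(0)\Delta_k(a+b)$ with $\Delta_k(0)=1$, needs log-concavity at every $n\ge1$. Theorem~\ref{thm:main} supplies it only for $n\ge N$, and tail log-concavity alone cannot give the inequality: a positive sequence equal to $Cr^n$ with $0<C<1$ for $n\ge N$ is log-concave there and violates it. You isolate exactly the missing input. Since $\rho$ is non-increasing on $n\ge N-1$, the quantity $\log\Delta_k(a)+\log\Delta_k(b)-\log\Delta_k(a+b)$ is non-decreasing in each of $a,b\ge N-1$. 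Hence, given Theorem~\ref{thm:main}, the corollary is \emph{equivalent} to the single case $\Delta_k(N)^2\ge\Delta_k(2N)$, and your condition $\Delta_k(N)\ge\rho(N)^N$ is a strengthening of it.

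That base case is the whole analytic content beyond Theorem~\ref{thm:main}, and you leave it as a sketch. I suggest verifying $\Delta_k(N)^2\ge\Delta_k(2N)$ directly. With $\Delta_k(N)\ge M_k(N)(1-\varepsilon_N)$, $\Delta_k(2N)\le M_k(2N)(1+\varepsilon_{2N})$ and $x_k^2(2N)=2x_k^2(N)+\frac{\pi^2(k+1)}{18}$, the margin is roughly $(2-\sqrt2)\sqrt{\alpha_k}\,x_k(N)-\frac52\log x_k(N)$, and no error is raised to a large power.

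In your formulation, the heuristic that ``the factor $\frac12$ absorbs the polynomial prefactors'' is not accurate. A relative error of order $1/x_k(N)$ in $\rho(N)$ arises, for instance, as the factor $(1-\frac{15}{8t})^{-1}$ you incur by using the upper bound for $I_2$ at $\sqrt{\alpha_k}x_k(N+1)$ and the lower bound at $\sqrt{\alpha_k}x_k(N)$. After raising to the power $N\approx\frac{3x_k^2(N)}{2\pi^2}$, it becomes roughly $\exp\bigl(\frac{45}{16\pi^2\sqrt{\alpha_k}}x_k(N)\bigr)$. This is exponential, though still below the margin $\exp\bigl(\frac{\sqrt{\alpha_k}}{2}x_k(N)\bigr)$, so your plan survives only if you track it.

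Either way, the check must be uniform in $k\ge3$. For the error terms, use Lemma~\ref{lem:relerr} together with condition (ii) of \eqref{eq:two-hyps}, which the proof of Theorem~\ref{thm:main} establishes for all $n\ge N$ (hence also at $n=2N$). Together they give $\varepsilon_n\le\frac{\pi^4\sqrt{\alpha_k}}{1728\,x_k^3(n)}$, so the $R_k$ contributions are negligible.

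A minor point: your rewriting of the product in \eqref{genef} does not by itself show $\Delta_k(n)>0$, since $1/(1+q^{(2k+1)n})$ has signed coefficients. Positivity for $n\ge N-1$ from Theorem~\ref{asympform} is all you need.
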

It is worth noting that the multiplicative properties were first studied by Bessenrodt and Ono \cite{Bessenrodt-Ono-2016} involving the ordinary partition function $p(n)$. Since then, various partition functions have been investigated in that direction, for instance,
Beckwith and Bessenrodt  \cite{Beckwith-Bessenrodt-2016} proved the multiplicative properties of  $k$-regular partition function  and   Bringmann, Kane, Rolen and Tripp \cite{Bringmann-Kane-Rolen-Tripp-2021}  obtained the multiplicative properties of $k$-colored partition function, which resolved a conjecture of Chern, Fu and Tang \cite{Chern-Fu-Tang-2018}.

This article is organized as follows. In Section \ref{sec:asm}, we obtain an exact formula for $\Delta_{k}(n)$ by using Zuckerman's formula for the Fourier coefficients of weakly holomorphic modular forms of arbitrary nonpositive weight, and subsequently prove Theorem \ref{asympform}. This theorem allows us to prove in Section \ref{sec:ineq-any} that $\Delta_{k}(n)$ satisfies the Tur\'an and Laguerre inequalities of any order and asymptotically complete monotonicity for sufficiently large $n$. In Section \ref{sec:log}, we further establish the log-concavity of $\Delta_{k}(n)$.

\section{Asymptotic formula for $\Delta_{k}(n)$}\label{sec:asm}
In this section, we present an exact formula for $\Delta_{k}(n)$ by a powerful result of Zuckerman \cite{Zuckerman-1939}. Using this formula, we further obtain an asymptotic formula for $\Delta_{k}(n)$ with an explicit error bound. Specifically, Zuckerman applied the Circle Method to furnish exact formulas for the Fourier coefficients of weakly holomorphic modular forms of arbitrary nonpositive weight on finite index subgroups of $\mathrm{SL}_{2}(\mathbb{Z})$ in terms of the cusps of the underlying subgroup and the principal parts of the form at each cusp. The relevant result from \cite[Theorem 1]{Zuckerman-1939} can be stated as follows.

\begin{thm}\label{zuc-asym}
Assume that $F$ is a weakly holomorphic modular form of weight $\kappa \leq 0$ on a congruence subgroup
with transformation law
\[
F\left( \frac{h}{j} + \frac{iz}{j} \right) = \chi(\gamma_{h,j}) (-iz)^{-\kappa} F\left( \frac{h^{\prime}}{j} + \frac{i}{jz} \right)
\]
for some multiplier $\chi:\mathrm{SL}_{2}(\mathbb{Z})\rightarrow\mathbb{C}$
and where $\gamma_{h,j} := \begin{pmatrix} h & \beta \\ j & -h^{\prime} \end{pmatrix} \in \mathrm{SL}_{2}(\mathbb{Z})$.
Suppose that $F$ has the
Fourier expansion at $i\infty$ given by $F(\tau) = \sum_{n \gg -\infty} a(n) q^{n+\lambda}$ and
Fourier expansions at each rational number $0 \leq \frac{h}{j} < 1$ given by
$\left. F \right|_{\kappa} \gamma_{h,j}(\tau) = \sum_{n \gg -\infty} a_{h,j}(n) q^{\frac{n + \lambda_{h,j}}{c_{j}}}
$. Then for $n + \lambda > 0$, we have
\begin{align*}
a(n) &= 2\pi (n + \lambda)^{\frac{\kappa - 1}{2}} \sum_{j=1}^{\infty} \frac{1}{j} \sum_{\substack{0 \leq h < j \\ \gcd(h,j) = 1}} \chi(\gamma_{h,j}) e^{-\frac{2\pi i (n + \lambda) h}{j}} \\
&\quad \times \sum_{m + \lambda_{h,j} \leq 0} a_{h,j}(m) e^{\frac{2\pi i}{j c_{j}} (m + \lambda_{h,j}) h^{\prime}} \left( \frac{|m + \lambda_{h,j}|}{c_{j}} \right)^{\frac{1 - \kappa}{2}} I_{-\kappa + 1} \left( \frac{4\pi}{j} \sqrt{\frac{(n + \lambda) |m + \lambda_{h,j}|}{c_{j}}} \right).
\end{align*}
\end{thm}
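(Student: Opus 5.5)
The statement is Zuckerman's theorem, so the plan is to rerun the Hardy--Ramanujan--Rademacher circle method with the general transformation law in place of the one for $\eta$. By Cauchy's formula,
\[
a(n)=\int_{\tau_0}^{\tau_0+1}F(\tau)e^{-2\pi i(n+\lambda)\tau}\,d\tau .
\]
I take the path along the horizontal line $\operatorname{Im}\tau=1/N^2$. Fix a level $N$ and split the unit interval by the Farey fractions $h/j$ of order $N$. On each Farey arc around $h/j$, write $\tau=\frac{h}{j}+\frac{iz}{j}$ with $z$ running over an arc whose endpoints are controlled by the neighbouring mediants; this is Rademacher's Ford-circle parametrization, with $\operatorname{Re}z>0$ and $|z|$ comparable to $j/N$. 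Apply the hypothesised transformation law
\[
F\!\left(\tfrac{h}{j}+\tfrac{iz}{j}\right)=\chi(\gamma_{h,j})(-iz)^{-\kappa}F\!\left(\tfrac{h'}{j}+\tfrac{i}{jz}\right),
\]
and expand the right-hand side at the cusp via $F|_\kappa\gamma_{h,j}=\sum a_{h,j}(m)q^{(m+\lambda_{h,j})/c_j}$. This turns the integrand on each arc into
\[
\chi(\gamma_{h,j})\,e^{-2\pi i(n+\lambda)h/j}\,(-iz)^{-\kappa}\,e^{2\pi(n+\lambda)z/j}\sum_m a_{h,j}(m)\,e^{\frac{2\pi i}{jc_j}(m+\lambda_{h,j})h'}\,e^{-\frac{2\pi(m+\lambda_{h,j})}{jc_j z}} .
\]

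Next I split the $m$-sum into the principal part $m+\lambda_{h,j}\le 0$ and the rest. For the non-principal terms, $\operatorname{Re}(1/z)\ge j/2$ on the Ford circle makes $e^{-2\pi(m+\lambda_{h,j})/(jc_jz)}$ uniformly bounded. Summing the arcs then gives a total contribution of order
\[
\sum_{j\le N} j\cdot\frac{1}{jN}\cdot (j/N)^{-\kappa}\ll N^{\kappa}\cdot(\text{const}),
\]
since the number of arcs with denominator $j$ is at most $j$ and each arc has length $\asymp 1/(jN)$. To obtain a genuine error term that tends to $0$ as $N\to\infty$, I would use Rademacher's device: because $\kappa\le0$ the exponent $-\kappa\ge 0$ helps, but one must exploit that the arcs are glued into full circles. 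Concretely, I write the arc integral as the integral over the whole Ford circle through $z$, minus two small chords near the tangency points. The non-principal part contributes $O(N^{\kappa-1/2})$ or better by a Kloosterman-free trivial bound.

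For the principal terms, I complete each arc integral to the full circle $|z-\tfrac12|=\tfrac12$ traversed once, which is the standard step. The added pieces are again bounded by the same estimates, because the principal part is a finite sum with $|m+\lambda_{h,j}|$ bounded uniformly in $h,j$ (there are finitely many cusps). On the full circle, the substitution $w=1/z$ maps the circle to the line $\operatorname{Re}w=1$, and the integral becomes a Hankel-type loop integral
\[
\frac{1}{2\pi i}\int_{1-i\infty}^{1+i\infty} w^{\kappa-2}\exp\!\left(\frac{2\pi(n+\lambda)}{jw}+\frac{2\pi|m+\lambda_{h,j}|w}{jc_j}\right)dw .
\]
By the Schl\"afli integral representation of $I_\nu$, this evaluates to
\[
\left(\frac{|m+\lambda_{h,j}|}{c_j(n+\lambda)}\right)^{\frac{1-\kappa}{2}} I_{1-\kappa}\!\left(\frac{4\pi}{j}\sqrt{\frac{(n+\lambda)|m+\lambda_{h,j}|}{c_j}}\right).
\]
Collecting the constants gives exactly the $j$-th term of the stated formula with the prefactor $2\pi(n+\lambda)^{\frac{\kappa-1}{2}}$. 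Terms with $m+\lambda_{h,j}=0$ need a separate limiting computation, giving $0$ when $\kappa<0$ and handled by $I_1(x)/x\to\tfrac12$ otherwise. Letting $N\to\infty$ yields the series.

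I expect the main obstacle to be the error estimate showing that all discarded pieces vanish as $N\to\infty$, uniformly over the Farey arcs. This covers the non-principal parts and the circle-completion chords. It is most delicate at the boundary case $\kappa=0$, where the trivial bound $O(N^\kappa)$ does not decay. There I would first try Rademacher's finer treatment: split the chord integrals according to the neighbouring mediant denominators and bound the integrand on chords by $|z|^{-\kappa}\ll (j/N)^{-\kappa}$ times the chord length $\ll j/N^{2}\cdot$. Summing gives $\sum_{j\le N}j\cdot j^{1-\kappa}N^{\kappa-2}\ll N^{\kappa-1/2}$ after using $|z|\ll j/N$ more carefully. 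If that fails at $\kappa=0$, I would fall back on bounding incomplete Kloosterman-type sums $\sum_h\chi(\gamma_{h,j})e(\cdots)$ by $O(j^{1-\delta})$.
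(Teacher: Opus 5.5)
The paper gives no proof of this statement. It quotes it from Zuckerman [Theorem 1] and uses only the case $\kappa=-1$. Your route is Zuckerman's own: Rademacher's Ford-circle circle method, trivial estimation of the non-principal part, completion of the principal part to the full circle, and the Schl\"afli/Hankel evaluation giving $\bigl(\frac{|m+\lambda_{h,j}|}{c_j(n+\lambda)}\bigr)^{\frac{1-\kappa}{2}}I_{1-\kappa}(\cdot)$. Your main-term computation is correct.

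For $\kappa<0$ the error analysis is simpler than you make it, but the bookkeeping needs fixing.
\begin{itemize}
\item Choose one dissection. The horizontal line $\operatorname{Im}\tau=1/N^2$ and the Ford-circle path are different, and the full-circle completion belongs to the latter.
\item With $\tau=\frac hj+\frac{iz}{j}$, the Ford circle is $|z-\frac1{2j}|=\frac1{2j}$, so $\operatorname{Re}(1/z)=j$ on it. The Rademacher arc can have $z$-length of order $1/j$.
\item So for the non-principal part you first deform the arc to the chord from $z_1$ to $z_2$. That chord has length $\ll 1/N$, and on it $|z|\le\sqrt2/N$, not $|z|\asymp j/N$.
\item With $d\tau=\frac ij\,dz$, the total non-principal contribution is $\ll\sum_{j\le N}j\cdot\frac1j\cdot N^{\kappa}\cdot\frac1N=O(N^{\kappa})$. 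The same bound holds for the two short arcs near $z=0$ added when completing the principal part.
\end{itemize}
This already tends to $0$ when $\kappa<0$, so no further device is needed. Your claimed $O(N^{\kappa-1/2})$ is not what the trivial bound gives.

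The terms with $m+\lambda_{h,j}=0$ also need no special treatment. After $w=1/z$, the integrand $w^{\kappa-2}e^{b/w}$ is holomorphic and $O(|w|^{-2})$ in $\operatorname{Re}w>0$. So the full-circle integral is $0$ for every $\kappa\le 0$, matching $0^{\frac{1-\kappa}{2}}I_{1-\kappa}(0)=0$. There is no $I_1(x)/x$ contribution at $\kappa=0$.

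The genuine gap is the case $\kappa=0$, which the statement includes. There the trivial bound is $O(1)$ and does not decay. Your proposed refinement does not help, since $\sum_{j\le N} j\cdot j^{1-\kappa}N^{\kappa-2}\asymp N$, which grows.

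In fact, no rearrangement of absolute-value bounds can work at $\kappa=0$. Since $I_1(x/j)\sim x/(2j)$, the $j$-th term of the series has size
\[
j^{-2}\Bigl|\sum_h\chi(\gamma_{h,j})e^{-2\pi i(n+\lambda)h/j}e^{\frac{2\pi i}{jc_j}(m+\lambda_{h,j})h'}\Bigr|.
\]
With only the trivial bound $j$ on this sum, you cannot even show the right-hand side converges. What is needed is a power-saving bound for these generalized Kloosterman sums attached to $\chi$. You need it for complete sums and for the incomplete sums that arise when the Farey-arc errors are regrouped by denominator, as in Rademacher's treatment of the coefficients of $j(\tau)$ or via Weil-type estimates for specific multipliers.

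This is extra arithmetic input about the multiplier, not a consequence of the circle-method bookkeeping, and your proposal only names it as a fallback. As it stands, your argument proves the statement for $\kappa<0$, which covers the paper's use at $\kappa=-1$, but not for $\kappa=0$.
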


\noindent{\it Proof of Theorem \ref{exact}.}
Combining with \eqref{genef}, we are led to
\begin{align}\label{dia-eta}
\sum_{n=0}^{\infty} \Delta_{k}(n) q^{n} =q^{\frac{k+1}{12}} f_{k}(\tau)
\end{align}
with
\begin{equation*}
f_{k}(\tau):=\frac{\eta\left(2\tau\right)\eta\left((2 k+1)\tau \right)}{\eta\left(\tau\right)^{3}\eta\left((4 k+2)\tau\right)},
\end{equation*}
where $\eta(\tau)$ is the Dedekind eta function defined on the upper half-plane $\mathbb{H}=\{\tau \in \mathbb{C} | \operatorname{Im} \tau>0\}$ by
\begin{equation}\label{eta-def}
\eta(\tau):=q^{\frac{1}{24}}\prod_{n=1}^\infty(1-q^n),~q:=e^{2\pi i\tau}.
\end{equation}
It should be noted that $f_{k}$ is a weakly holomorphic modular form of weight $-1$ on a congruence subgroup $\Gamma_0(4 k+2)$.
We now establish the asymptotics of $\Delta_{k}(n)$ by applying Theorem \ref{zuc-asym} to $f_{k}$.

Equation \eqref{dia-eta} implies that
\begin{equation*}
f_{k}(\tau)=\sum_{n=0}^{\infty} \Delta_{k}(n) q^{n-\frac{k+1}{12}}.
\end{equation*}
For $\operatorname{Re}(z) > 0, j\in \mathbb{N}, h\in \mathbb{Z}$ with $\gcd(h, j) = 1$, and $h'\in\mathbb{Z}$ with
$hh' \equiv -1 \pmod j$, the Dedekind eta function satisfies the following transformation law \cite{Apostol-1990}:
\begin{equation}\label{eta-trans}
\eta\left(\frac{1}{j}(h+i z)\right)=e^{\frac{\pi i}{12 j}\left(h-h^{\prime}\right)} \omega_{h, j}^{-1} z^{-\frac{1}{2}} \eta\left(\frac{1}{j}\left(h^{\prime}+\frac{i}{z}\right)\right).
\end{equation}
To derive the transformation formula for $f_{k}(\tau)$ at the cusp $\frac{h}{j}$, we write
$$
e^{\frac{2 \pi i t}{j}(h+i z)}=e^{\frac{2 \pi i}{\operatorname{gcd}(j, t)}\left(h \frac{t}{\operatorname{gcd}(j, t)}+i \frac{t}{\operatorname{gcd}(j, t)} z\right)},
$$
where we note that $\gcd\left(h \frac{t}{\gcd(j, t)}, \frac{j}{\gcd(j, t)}\right)=1$. Therefore, using \eqref{eta-trans} with $j \mapsto \frac{j}{\gcd(j, t)}, h \mapsto$ $h \frac{t}{\gcd(j, t)}, z \mapsto \frac{t}{\gcd(j, t)} z$, we have
\begin{align*}
\eta\left(\frac{t}{j}(h+i z)\right)
=&e^{\frac{\pi i\gcd(j, t)}{12 j}\left(h\frac{t}{\gcd(j, t)} -h_{j,t}\right)}\omega_{h \frac{t}{\gcd(j, t)}, \frac{j}{\gcd(j, t)}}^{-1} \left(\frac{t}{\gcd(j, t)} z\right)^{-\frac{1}{2}}
\\
& \times\eta\left(\frac{\gcd(j, t)}{j}\left(h_{j, t}+i \frac{\gcd(j, t)}{t z}\right) \right).
\end{align*}
 It follows that
\begin{align*}
f_{k}\left(\frac{1}{j}(h+i z)\right)
=&\Omega_{k}(h,j)\cdot ze^{\frac{\pi i}{12j}\left((-2k-2)h-\gcd(j,2)h_{j, 2}-\gcd(j,2k+1)h_{j, 2k+1}+3h'+\gcd(j,4k+2)h_{j, 4k+2}\right)}
\\
&\times \frac{\eta\left(\frac{\gcd(j, 2)}{j}\left(h_{j, 2}+i \frac{\gcd(j, 2)}{2z}\right) \right)\eta\left(\frac{\gcd(j, 2k+1)}{j}\left(h_{j, 2k+1}+i \frac{\gcd(j, 2k+1)}{(2k+1) z}\right) \right)}{\eta\left(\frac{1}{j}\left(h^{\prime}+\frac{i}{z}\right)\right)^3
\eta\left(\frac{\gcd(j, 4k+2)}{j}\left(h_{j, 4k+2}+i \frac{\gcd(j, 4k+2)}{(4k+2) z}\right) \right)},
\end{align*}
where
$\Omega_{k}(h,j)$ is defined in \eqref{def:Omega}.
Applying the generating function of $k$-colored partitions
\begin{equation*}
\frac{1}{(q;q)_{\infty}^t}=\sum_{n=0}^{\infty}p_t(n)q^n
\end{equation*}
and the Euler's pentagonal number theorem \cite[(1.3.1)]{Andrews-1976}
\begin{equation*}
(q;q)_{\infty}=\sum_{n=-\infty}^{\infty}(-1)^nq^{n(3n-1)/2},
\end{equation*}
and along with \eqref{eta-def}, we have
\begin{align*}
f_{k}\left(\frac{1}{j}(h+i z)\right)
=&\Omega_{k}(h,j)\cdot ze^{-\frac{\pi i(k+1)h}{6j}}e^{\frac{\pi }{12jz}\cdot\frac{\left(\gcd(j,2)^2-2\right)\left(\gcd(j,2k+1)^2
-(2k+1)\right)+2(4k+2)}{4k+2}}
\\
&\times \sum_{n_1,n_2\geq 0}p_3(n_1)p(n_2)e^{\frac{2\pi i}{j}\left(n_1 h'+n_2\gcd(j,4k+2)h_{j,4k+2}\right)-\frac{2\pi }{jz}\left(n_1+\frac{\gcd(j,4k+2)^2}{4k+2}n_2\right)}
\\
&\times \sum_{n_3,n_4\in \mathbb{Z}}(-1)^{n_3+n_4}
e^{\frac{\pi i}{j}\left(n_3(3n_3-1)\gcd(j,2)h_{j,2}
+n_4(3n_4-1)\gcd(j,2k+1)h_{j,2k+1}\right)}
\\
&\qquad\qquad \times e^{-\frac{2\pi }{jz}\left(\frac{\gcd(j,2)^2}{4}n_3(3n_3-1)
+\frac{\gcd(j,2k+1)^2}{4k+2}n_4(3n_4-1)\right)}.
\end{align*}
The principal part of this expression is
\begin{align*}
&\Omega_{k}(h,j)\cdot ze^{-\frac{\pi i(k+1)h}{6j}}\sum_{\substack{n_1,n_2\geq 0 \\ n_3,n_4\in \mathbb{Z} \\r_{h,j}(n_1,n_2,n_3,n_4)\geq 0}}p_3(n_1)p(n_2)(-1)^{n_3+n_4}
e^{\frac{\pi i}{j}b_{h,j}(n_1,n_2,n_3,n_4)}e^{\frac{\pi }{12jz}r_{h,j}(n_1,n_2,n_3,n_4)},
\end{align*}
where
$b_{h,j}(n_1,n_2,n_3,n_4)$ and $r_{h,j}(n_1,n_2,n_3,n_4)$ are given in \eqref{def:b} and \eqref{def:r}, respectively.

Then in light of Theorem \ref{zuc-asym}, for $n-\frac{k+1}{12}>0$, we have
\begin{align*}
\Delta_{k}(n) &= \frac{2\pi}{n-\frac{k+ 1}{12}} \sum_{j=1}^{\infty} \frac{1}{j} \sum_{\substack{0 \leq h < j \\ \gcd(h,j) = 1}}\Omega_{k}(h,j) e^{-\frac{2\pi i h}{j}\left(n-\frac{k+ 1}{12}\right)}
\sum_{\substack{n_1,n_2\geq 0 \\ n_3,n_4\in \mathbb{Z} \\r_{h,j}(n_1,n_2,n_3,n_4)\geq 0}}p_3(n_1)p(n_2)(-1)^{n_3+n_4}
\\
&\quad \times e^{\frac{\pi i}{j}b_{h,j}(n_1,n_2,n_3,n_4)}
\frac{r_{h,j}(n_1,n_2,n_3,n_4)}{24} I_{2} \left( \frac{\pi}{j} \sqrt{\frac{2(n-\frac{k+ 1}{12})r_{h,j}(n_1,n_2,n_3,n_4)}{3}} \right)
\\
&=\frac{ \pi^3}{18 x^2_{k}(n)}\sum_{j=1}^{\infty} \frac{1}{j}
\sum_{\substack{n_1,n_2\geq 0 \\ n_3,n_4\in \mathbb{Z} \\r_{h,j}(n_1,n_2,n_3,n_4)\geq 0}}A_{j}(n,n_1,n_2,n_3,n_4)p_3(n_1)p(n_2)(-1)^{n_3+n_4}
\\
&\quad \times r_{h,j}(n_1,n_2,n_3,n_4) I_{2} \left( \frac{x_{k}(n)\sqrt{r_{h,j}(n_1,n_2,n_3,n_4)}}{j}  \right),
\end{align*}
where $A_{j}(n,n_1,n_2,n_3,n_4)$ and $x_{k}(n)$ are defined in \eqref{def:Aj} and \eqref{defi-xk}, respectively.
This completes the proof of Theorem \ref{exact}. \qed

According to Theorem \ref{exact}, we can prove Theorem \ref{asympform} by determining the main term and the errors.

\noindent{\it Proof of Theorem \ref{asympform}.}
Since $x^sI_s(x)$ is monotonically increasing as $x\rightarrow\infty$ for any fixed $s$, the main contribution comes from the term with largest value of $\frac{\sqrt{r_{h,j}(n_1,n_2,n_3,n_4)}}{j}$. It is clear that we need $n_1=n_2=n_3=n_4=0$. Then we should consider the largest value of
\begin{equation*}
\frac{\sqrt{r_{h,j}(0,0,0,0)}}{j}=\sqrt{\frac{\left(\gcd(j,2)^2-2\right)\left(\gcd(j,2k+1)^2
-(2k+1)\right)+2(4k+2)}{j^2(4k+2)}}.
\end{equation*}
If $j$ is even, then
\begin{equation*}
\frac{r_{h,j}(0,0,0,0)}{j^2}
=\frac{1}{j^2}\left(1+\frac{\gcd(j,2k+1)^2}{2k+1}\right),
\end{equation*}
which attains its maximum value $\frac{k+1}{4k+2}$ at $j=2$.
If $j$ is odd, then
\begin{equation*}
\frac{r_{h,j}(0,0,0,0)}{j^2}
=\frac{1}{j^2}\left(\frac{5}{2}-\frac{\gcd(j,2k+1)^2}{4k+2}\right),
\end{equation*}
which attains its maximum value $\alpha_k=\frac{5k+2}{2k+1}$ at $j=1$. The main term is thus obtained by taking $n_1=n_2=n_3=n_4=0$ and $j=1$, that is
\begin{equation*}
\frac{ \pi^3\alpha_k}{18 {x^2_{k}(n)}} I_2\left(\sqrt{\alpha_k } {x_{k}(n)}\right),
\end{equation*}
where $x_{k}(n)$ is defined in \eqref{defi-xk}. The remaining terms, including the terms arising from $j\geq 2$ and any $n_1,n_2,n_3,n_4$ and the terms coming from $j=1$ and $(n_1,n_2,n_3,n_4)\neq(0,0,0,0)$, are naturally the errors, i.e.,
\begin{align*}
R_{k}(n)=&\frac{ \pi^3}{18 x^2_{k}(n)}\sum_{j\geq 2 } \frac{1}{j}
\sum_{\substack{n_1,n_2\geq 0 \\ n_3,n_4\in \mathbb{Z} \\r_{h,j}(n_1,n_2,n_3,n_4)\geq 0}}A_{j}(n,n_1,n_2,n_3,n_4)p_3(n_1)p(n_2)(-1)^{n_3+n_4}
\\
&\quad \times r_{h,j}(n_1,n_2,n_3,n_4) I_{2} \left( \frac{x_{k}(n)\sqrt{r_{h,j}(n_1,n_2,n_3,n_4)}}{j}  \right)
\\
&+\frac{ \pi^3}{18 x^2_{k}(n)}
\sum_{\substack{n_1,n_2\geq 0, n_3,n_4\in \mathbb{Z} \\r_{h,1}(n_1,n_2,n_3,n_4)\geq 0\\ (n_1,n_2,n_3,n_4)\neq(0,0,0,0)}}A_{1}(n,n_1,n_2,n_3,n_4)p_3(n_1)p(n_2)(-1)^{n_3+n_4}
\\
&\quad \times r_{h,1}(n_1,n_2,n_3,n_4) I_{2} \left( x_{k}(n)\sqrt{r_{h,1}(n_1,n_2,n_3,n_4)}  \right),
\end{align*}
where we denote the associated sums by $R_{k}^{[1]}(n)$ and $R_{k}^{[2]}(n)$.
We next establish the upper bound for $|R_{k}(n)|$ by estimating $R_{k}^{[1]}(n)$ and $R_{k}^{[2]}(n)$ respectively.

We first consider $R_{k}^{[1]}(n)$. To this end, we need an estimate for $r_{h,j}(n_1,n_2,n_3,n_4)$.
If $2|j$, then $r_{h,j}(n_1,n_2,n_3,n_4)$ equals
\begin{equation*}
1+\frac{\gcd(j,2k+1)^2}{2k+1}-24\left(n_1
+\frac{2\gcd(j/2,2k+1)^2}{2k+1}n_2+
n_3(3n_3-1)+\frac{\gcd(j,2k+1)^2}{4k+2}n_4(3n_4-1)\right),
\end{equation*}
which can be nonnegative when $(n_1,n_3)\neq (0,0)$ and $n_2=n_4=0$ or $n_1=n_3=0$, $n_2\geq 0$ and $n_4\in \mathbb{Z}$. We also have
\begin{equation*}
r_{h,j}(n_1,n_2,n_3,n_4)\leq1+\frac{\gcd(j,2k+1)^2}{2k+1}\leq 2k+2.
\end{equation*}
If $2\nmid j$, then $r_{h,j}(n_1,n_2,n_3,n_4)$ equals
\begin{align*}
\frac{5}{2}-\frac{\gcd(j,2k+1)^2}{4k+2}
-24\left(n_1+\frac{\gcd(j,4k+2)^2}{4k+2}n_2+
\frac{n_3(3n_3-1)}4+\frac{\gcd(j,2k+1)^2}{4k+2}n_4(3n_4-1)\right),
\end{align*}
which can be nonnegative provided that $n_1=n_3=0$. We also have
\begin{equation*}
r_{h,j}(n_1,n_2,n_3,n_4)\leq \frac{5}{2}-\frac{\gcd(j,2k+1)^2}{4k+2} \leq\alpha_k.
\end{equation*}
Then combining with
\begin{equation}\label{A-bound}
 |A_{j}(n,n_1,n_2,n_3,n_4)|\leq j
\end{equation}
for any $n,n_1,n_2\geq 0$, $n_3,n_4\in\mathbb{Z}$ and $j\geq 1$,
we conclude that
\begin{align*}
|R_{k}^{[1]}(n)|
\leq &\frac{\pi^3}{18 {x^2_{k}(n)}}\left(
\left(2k+2\right)\sum_{\substack{j\geq 2\\2|j}}
I_{2} \left( \frac{x_{k}(n)}{j}\sqrt{1+\frac{\gcd(j,2k+1)^2}{2k+1}} \right)
\right.
\\
&\times\Bigg(\sum_{\substack{(n_1,n_3)\neq(0,0) \\n_1+n_3(3n_3-1)\leq \frac{k+1}{12}}}p_3(n_1)+\sum_{\substack{n_2\geq 0,n_4\in \mathbb{Z} \\4n_2+n_4(3n_4-1)\leq \frac{k+1}{6}}}p(n_2)\Bigg)
\\
&\left.+\alpha_k\sum_{\substack{j\geq 2\\2\nmid j }}I_{2} \left( \frac{x_{k}(n)}{j} \sqrt{\alpha_k} \right)
\sum_{\substack{n_2\geq 0,n_4\in \mathbb{Z} \\n_2+n_4(3n_4-1)\leq \frac{5k+2}{12}}}p(n_2)
\right).
\end{align*}
Using Lemma 2.2 of Bringmann, Kane, Rolen and Tripp \cite{Bringmann-Kane-Rolen-Tripp-2021}, we find that for $\kappa\in \mathbb{R}$ with $\kappa>-\frac{1}{2}$,
\begin{equation}\label{I-bound}
I_{\kappa}(s) \leq
\begin{cases}
\sqrt{\frac{2}{\pi s}} e^s, & \text{if}~s\geq 1,\\
\frac{2^{1-\kappa}s^\kappa}{\Gamma(\kappa+1)},& \text{if}~ 0\leq s<1.
\end{cases}
\end{equation}
Thus we obtain that
\begin{align}\label{I-bound-1}
&\sum_{\substack{j\geq 2\\2|j}}
I_{2} \left( \frac{x_{k}(n)}{j}\sqrt{1+\frac{\gcd(j,2k+1)^2}{2k+1}} \right)
=\sum_{j\geq 1}
I_{2} \left( \frac{x_{k}(n)}{2j}\sqrt{1+\frac{\gcd(j,2k+1)^2}{2k+1}} \right)
\notag\\
&\leq \sum_{j=1}^{\lfloor\frac{x_{k}(n)}{2}\sqrt{2k+2}\rfloor}
I_{2} \left( \frac{x_{k}(n)}{2}\sqrt{\frac{2k+2}{2k+1}} \right)+\sum_{j= \lfloor\frac{x_{k}(n)}{2}\sqrt{2k+2}\rfloor+1}^\infty I_{2} \left( \frac{x_{k}(n)\sqrt{2k+2}}{2j} \right)
\notag\\
&\leq(2k+2)^{\frac14}(2k+1)^{\frac14}\sqrt{\frac{x_k(n)}{\pi}}
\exp \left( \frac{x_{k}(n)}{2}\sqrt{\frac{2k+2}{2k+1}} \right)+\frac{(k+1)x_k^2(n)}{4\Gamma(3)}\sum_{j= \lfloor\frac{x_{k}(n)}{2}\sqrt{2k+2}\rfloor+1}^\infty\frac{1}{j^2}
\notag\\
&\leq\sqrt{\frac{(2k+2)x_k(n)}{\pi}}
\exp \left( \frac{x_{k}(n)}{2}\sqrt{\frac{2k+2}{2k+1}} \right)+\frac{\sqrt{k+1}}{4\sqrt{2}}x_k(n)
\notag\\
&\leq\sqrt{(k+1)x_k(n)}\exp \left( \frac{x_{k}(n)}{2}\sqrt{\frac{2k+2}{2k+1}} \right),
\end{align}
where in the last step, we use that $\sqrt{x}<e^{\frac x2}$ for $x\geq0$.
Similarly, since $\alpha_k=\frac{5k+2}{2k+1}<\frac 52$ and $\sqrt{x}<\frac{3}{4}e^{\frac x3}$ for $x\geq0$, we get
\begin{align}\label{I-bound-2}
\sum_{\substack{j\geq 2\\2\nmid j }}I_{2} \left( \frac{x_{k}(n)}{j} \sqrt{\alpha_k} \right)
&\leq\sum_{j=3}^{\lfloor x_{k}(n)\sqrt{\alpha_k}\rfloor}I_{2} \left( \frac{x_{k}(n)}{3} \sqrt{\alpha_k} \right)+\sum_{j= \lfloor x_{k}(n)\sqrt{\alpha_k}\rfloor+1}^\infty I_{2} \left( \frac{x_{k}(n)}{j} \sqrt{\alpha_k} \right)
\notag\\
&\leq \sqrt{\frac{6 x_{k}(n)}{\pi}} \alpha_k^{\frac14}
\exp \left( \frac{x_{k}(n)}{3} \sqrt{\alpha_k} \right)+\sqrt{\alpha_k}\cdot\frac{x_k(n)}{4}
\notag\\
&\leq 2\sqrt{x_{k}(n)}\exp \left( \frac{x_{k}(n)}{3} \sqrt{\alpha_k} \right).
\end{align}
Combining \eqref{I-bound-1} and \eqref{I-bound-2}, by determining the number of solutions for $n_1,n_2,n_3$ and $n_4$, for $k\geq1$, we have
\begin{align}\label{R1-bound}
|R_{k}^{[1]}(n)|
\leq& \frac{\pi^3}{18 {x^2_{k}(n)}}\left(\left(2k+2\right)
\sqrt{(k+1)x_k(n)}
\exp \left( \frac{x_{k}(n)}{2}\sqrt{\frac{2k+2}{2k+1}} \right)
\right.
\notag\\
&\times\Bigg(\sum_{n_1=0}^{\lfloor\frac{k+1}{12}\rfloor}
\left\lceil\frac{\sqrt{k+2}}{3}\right\rceil p_3(n_1)+\sum_{n_2=0}^{\lfloor\frac{k+1}{24}\rfloor}
\left\lceil\frac{\sqrt{2k+2}}{3}\right\rceil p(n_2)\Bigg)
\notag\\
&\left.+ 2\alpha_k\sqrt{x_{k}(n)}
\exp \left( \frac{x_{k}(n)}{3} \sqrt{\alpha_k} \right)
\sum_{n_2=0}^{\lfloor\frac{5k+2}{12}\rfloor}
\left\lceil\frac{\sqrt{5k +3}}{3}\right\rceil p(n_2)
\right).
\end{align}

We next turn our attention to $R_{k}^{[2]}(n)$. It's easy to see that
\begin{equation*}
r_{h,1}(n_1,n_2,n_3,n_4)=\frac{5k+2}{2k+1}-24\left(n_1+
\frac{n_2}{4k+2}+
\frac{n_3(3n_3-1)}4+\frac{n_4(3n_4-1)}{4k+2}\right),
\end{equation*}
which can be nonnegative when $n_1=n_3=0$. Since $n_1,n_2,n_3$ and
$n_4$ cannot all be $0$ simultaneously, $r_{h,1}(n_1,n_2,n_3,n_4)$ attains its maximum value $\beta_k=\frac{5k-10}{2k+1}$ at $n_1=n_3=n_4=0$ and $n_2=1$. It means that $r_{h,1}(n_1,n_2,n_3,n_4)\geq 0$ is no solution if $k\leq 2$. Based on \eqref{A-bound} and \eqref{I-bound}, for $k\geq 3$, we derive that
\begin{align}\label{R2-bound}
|R_{k}^{[2]}(n)|
&\leq \frac{\pi^3\beta_k}{18x^2_{k}(n)}
I_{2} \left( \sqrt{\beta_k}x_{k}(n) \right)
\sum_{\substack{n_2\geq 0,n_4\in \mathbb{Z} \\n_2+n_4(3n_4-1)\leq \frac{5k+2}{12}}}p(n_2)
\notag\\
&\leq \frac{\pi^3\beta_k}{18x^2_{k}(n)}
\left\lceil\frac{\sqrt{5k +3}}{3}\right\rceil
\sum_{m=0}^{\lfloor\frac{5k+2}{12}\rfloor}p(m)
I_{2} \left( \sqrt{\beta_k}x_{k}(n) \right)
\notag\\
&\leq \frac{\sqrt{2}\pi^{\frac{5}{2}}}{18x_{k}^{\frac{5}{2}}(n)} \beta_k^{\frac{3}{4}}
\left\lceil\frac{\sqrt{5k +3}}{3}\right\rceil
\sum_{m=0}^{\lfloor\frac{5k+2}{12}\rfloor}p(m)
\exp\left( \sqrt{\beta_k}x_{k}(n) \right).
\end{align}
It is evident that for $k \geq 3$,
\begin{equation*}
5k-10>\frac{5k+2}{9}\quad \text{and} \quad 5k-10>\frac{2k+2}{4},
\end{equation*}
and for $n>\frac{k+1}{12}$,
\begin{equation*}
x_{k}(n)\geq \frac{\pi}{6}.
\end{equation*}
Combining these with \eqref{R1-bound} and \eqref{R2-bound}, for $k\geq 3$ and $n\geq 1$, we have
\begin{align}\label{bound:Rk1}
&|R_{k}(n)|
\leq x_{k}^{-\frac{3}{2}}(n)
\exp\left( \sqrt{\beta_k} x_{k}(n)\right)
\Bigg[\frac{\pi^3}{9}\left(k+1\right)^{\frac32}
\left\lceil\frac{\sqrt{k +2}}{3}\right\rceil
\sum_{m=0}^{\lfloor\frac{k+1}{12}\rfloor}p_3(m)
+\sum_{m=0}^{\lfloor\frac{5k+2}{12}\rfloor}p(m)
\notag\\
&\qquad\quad~~\times\left(\frac{\pi^3}{9}\left(k+1\right)^{\frac32}
\left\lceil\frac{\sqrt{2k +2}}{3}\right\rceil+\left(2\alpha_k\cdot\frac{\pi^3}{18}
+\frac{\sqrt{2}\pi^{\frac{3}{2}}}{3} \beta_k^{\frac{3}{4}}\right)\left\lceil\frac{\sqrt{5k +3}}{3}\right\rceil\right)\Bigg].
\end{align}

Since $p_3(m)$ and $p(m)$ are nondecreasing in $m$, we have
\begin{equation}\label{eq:sums-comp}
\sum_{m=0}^{\lfloor\frac{k+1}{12}\rfloor}p_3(m)\le \left(\left\lfloor\tfrac{k+1}{12}\right\rfloor+1\right)
p_3(\left\lfloor\tfrac{k+1}{12}\right\rfloor),\qquad
\sum_{m=0}^{\lfloor\frac{5k+2}{12}\rfloor}p(m)\le \left(\left\lfloor\tfrac{5k+2}{12}\right\rfloor+1\right)
p(\left\lfloor\tfrac{5k+2}{12}\right\rfloor).
\end{equation}
Bringmann, Kane, Rolen and Tripp \cite[P. 626]{Bringmann-Kane-Rolen-Tripp-2021} gave a bound for $r$-colored partition function $p_r(m)$, that is, for all $m\ge1$,
\begin{equation*}
p_r(m)\ \le \exp\Big(\pi\sqrt{\tfrac{2rm}{3}}\Big).
\end{equation*}
Therefore, for all $m\ge1$,
\begin{equation}\label{eq:p-exp}
p(m)\ \le \exp\left(\pi\sqrt{\tfrac{2m}{3}}\right),\qquad
p_3(m)\ \le \exp\left(\pi\sqrt{2m}\right).
\end{equation}
For $k\ge3$,
\begin{align*}
\frac{k+1}{12}\leq \frac{k}{9},\quad \frac{k+1}{12}+1\leq \frac{k}{2},
\quad \frac{5k+2}{12}\leq \frac{k}{2},\quad \frac{5k+2}{12}+1\leq \frac{5k}{6},
\end{align*}
and together with \eqref{eq:sums-comp} and \eqref{eq:p-exp}, we obtain
\begin{equation}\label{eq:alg1}
\sum_{m=0}^{\lfloor\frac{k+1}{12}\rfloor}p_3(m) \le \frac{k}2\, e^{\pi\sqrt{2k/9}},\qquad
\sum_{m=0}^{\lfloor\frac{5k+2}{12}\rfloor}p(m)
\le \frac{5k}{6}\, e^{\pi\sqrt{k/3}}.
\end{equation}
Moreover, for $k\ge3$ we have the elementary bounds
\begin{align}
&\left(k+1\right)^{\frac32}
\left\lceil\frac{\sqrt{k +2}}{3}\right\rceil<\left(k+1\right)^{\frac32}
\left\lceil\frac{\sqrt{2k +2}}{3}\right\rceil\leq2k^2,\qquad
\left\lceil\frac{\sqrt{5k+3}}{3}\right\rceil\ \le\ \frac{3}{2}\sqrt{k}.
\end{align}
Finally, since $\alpha_k=\frac{5k+2}{2k+1}\in\big[\frac{17}{7},\,\frac52\big)$ and
$\beta_k=\frac{5k-10}{2k+1}\in\big[\frac{5}{7},\,\frac52\big)$ for $k\ge3$,
\begin{equation}\label{eq:alphabeta-bdd}
2\alpha_k\cdot\frac{\pi^3}{18}
+\frac{\sqrt{2}\pi^{\frac{3}{2}}}{3} \beta_k^{\frac{3}{4}}< 14.
\end{equation}

Plugging \eqref{eq:alg1}-\eqref{eq:alphabeta-bdd} into \eqref{bound:Rk1} yields
\begin{align*}
|R_{k}(n)|
&\leq \left(\frac{\pi^3k^3}{9}e^{\pi\sqrt{2k/9}}
+\left(\frac{2\pi^3k^2}{9}+\frac{42\sqrt{k}}{2}\right)\frac{5k}{6} e^{\pi\sqrt{k/3}}\right)x_{k}^{-\frac{3}{2}}(n)
\exp\left(\sqrt{\beta_k} x_{k}(n) \right)
\\
&\leq 12k^3 e^{\pi\sqrt{k/3}}x_{k}^{-\frac{3}{2}}(n)
\exp\left( \sqrt{\beta_k}x_{k}(n) \right)
\end{align*}
for $k\geq 3$. This completes the proof.
\qed

\section{Inequalities of any order}\label{sec:ineq-any}
In this section, we shall prove the Tur\'an and Laguerre inequalities of any order and asymptotically complete monotonicity by utilizing the asymptotic formula of $\Delta_{k}(n)$ stated in Theorem \ref{asympform}.

Griffin, Ono, Rolen and Zagier \cite{Griffin-Ono-Rolen-Zagier-2019} gave the following criteria for the positivity of the Tur\'an inequalities of any order for a sequence $\{\alpha(n)\}$.

\begin{thm}[Griffin, Ono, Rolen and Zagier]\label{gorz}
Let $\{\alpha(n)\}$, $\{A(n)\}$, and $\{\delta(n)\}$ be sequences of positive real numbers with $\delta(n)$ tending to zero and satisfying
\[
\log \left(\frac{\alpha(n+j)}{\alpha(n)}\right)=A(n)j-\delta^2(n)j^2
+o\left(\delta^d(n)\right) \quad \mbox{as}\quad n\rightarrow \infty
\]
for some $d\geq1$ and all $0\leq j\leq d$. Then, we have
\[
\lim_{n\rightarrow \infty}\left(\frac{\delta^{-d}(n)}{\alpha(n)}J_{\alpha}^{d,n}\left(\frac{\delta(n)X-1}{\exp
(A(n))}\right)\right)=H_d(X)
\]
uniformly for $X$ in any compact subset of $\mathbb{R}$, where $H_d(X)$ are the Hermite polynomials. Furthermore, this
implies that the polynomials $J_{\alpha}^{d,n}(X)$ are hyperbolic for all but finitely many values of $n$.
\end{thm}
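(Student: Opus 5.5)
The plan is a direct Taylor-expansion argument. Normalize by setting
\[
\frac{\alpha(n+j)}{\alpha(n)}e^{-A(n)j}=\exp\bigl(-\delta^2(n)j^2+o(\delta^d(n))\bigr).
\]
Substituting $X\mapsto(\delta X-1)e^{-A}$ into $J_\alpha^{d,n}$ then gives
\[
\frac{1}{\alpha(n)}J_\alpha^{d,n}\!\left(\frac{\delta X-1}{e^{A}}\right)=\sum_{j=0}^d\binom{d}{j}(\delta X-1)^j e^{-\delta^2 j^2}\bigl(1+o(\delta^d)\bigr),
\]
writing $\delta=\delta(n)$, $A=A(n)$. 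Since only $0\le j\le d$ occurs, the error $o(\delta^d)$ multiplies a polynomial in $X$ whose coefficients are bounded as $\delta\to0$. So on a compact $X$-set it contributes $o(\delta^d)$ uniformly, and after multiplying by $\delta^{-d}$ it is negligible. It therefore suffices to analyse the main sum exactly.

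For the main sum I expand $(\delta X-1)^j=\sum_i\binom{j}{i}\delta^iX^i(-1)^{j-i}$ and $e^{-\delta^2j^2}=\sum_{m\ge0}(-\delta^2)^m j^{2m}/m!$. Using $\binom{d}{j}\binom{j}{i}=\binom{d}{i}\binom{d-i}{l}$ with $j=i+l$, the coefficient of $X^i$ becomes
\[
\delta^i\binom{d}{i}\sum_{m\ge0}\frac{(-\delta^2)^m}{m!}\sum_{l=0}^{d-i}\binom{d-i}{l}(-1)^{l}(l+i)^{2m}.
\]
The inner sum is, up to sign, a $(d-i)$-th finite difference of a polynomial of degree $2m$. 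It therefore vanishes when $2m<d-i$. When $2m=d-i$ it equals $(-1)^{d-i}(d-i)!$, since only the leading coefficient survives.

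Hence every nonzero term carries a power $\delta^{i+2m}\ge\delta^d$. The terms with $i+2m=d$ (requiring $d-i$ even, $m=(d-i)/2$) give exactly
\[
\delta^d\frac{d!\,(-1)^m}{m!\,(d-2m)!}X^{d-2m},
\]
which are the coefficients of the Hermite polynomial $H_d$ normalized by $\sum_d H_d(X)t^d/d!=e^{Xt-t^2}$, the normalization used by Griffin, Ono, Rolen and Zagier. All remaining terms are $O(\delta^{d+1})$ with coefficients bounded for $X$ in a compact set, because the $m$-series converges absolutely and uniformly. Multiplying by $\delta^{-d}$ and letting $n\to\infty$, using $\delta(n)\to0$, yields the stated uniform limit.

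For hyperbolicity, $H_d$ has $d$ simple real zeros, so it changes sign at $d$ points of a fixed compact interval. By uniform convergence, the rescaled polynomial also has $d$ sign changes for all large $n$, and since it has degree $d$, all its roots are real. The map $X\mapsto(\delta X-1)e^{-A}$ is a real affine bijection because $\delta>0$, and the factor $\delta^{-d}/\alpha(n)>0$ does not move zeros, so $J_\alpha^{d,n}$ is hyperbolic for all but finitely many $n$.

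The main obstacle is the bookkeeping in the second step. One has to verify the finite-difference identity, and check that the interchange of the $m$-series with the $j$-sum, together with the $o(\delta^d)$ error, stays uniform on compacts. The latter follows because the number of $j$'s is fixed and finite.
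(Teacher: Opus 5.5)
Your argument is correct. The finite-difference vanishing, the identification of the surviving $\delta^d$ terms with $H_d(X)=d!\sum_m (-1)^m X^{d-2m}/(m!(d-2m)!)$, and the sign-change argument for hyperbolicity all check out. The normalization $e^{Xt-t^2}$ of $H_d$ that you use is the one of Griffin--Ono--Rolen--Zagier, which the paper leaves implicit. The degree is exactly $d$ because the leading coefficient is $\alpha(n+d)e^{-dA(n)}/\alpha(n)>0$.

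The paper gives no proof of this statement---it simply quotes it from Griffin, Ono, Rolen and Zagier. As far as I recall, your expand-and-take-finite-differences route is essentially the standard argument from that source.
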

For the Laguerre inequalities of any order, Wang and Yang \cite{Wang-Yang-2024} gave the following result.

\begin{thm}[Wang and Yang]\label{theowy4}
Let $\{\alpha(n)\},\{\delta(n)\},\{A_m(n)\}$ be sequences such that as $n\rightarrow\infty$, $\alpha(n)$ stays positive, $\delta(n)\rightarrow 0^+$, $A_2(n)<0$, $A_2(n)=\Theta(\delta^{t}(n))$ and $A_{2m}(n)=o(\delta^{mt}(n))\ (m\geq2)$ for some positive $t$, and for $-r<j<r$,
\begin{equation*}
\log\left(\frac{\alpha(n+j)}{\alpha(n)}\right)
=\sum_{m=1}^{2N}A_m(n)j^m+o\left(\delta^{tr}(n)\right),
\end{equation*}
then for sufficiently large $n$,
\begin{equation*}
L_r(\alpha(n))=\frac{1}{2}\sum_{j=0}^{2r}(-1)^{j+r}{2r\choose j}\alpha(n+j)\alpha(n+2r-j)>0.
\end{equation*}
\end{thm}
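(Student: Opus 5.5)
The plan is to recenter the Laguerre expression at $n+r$, so that the odd coefficients $A_{2m+1}$ cancel. What remains is then expanded in powers of $i$; a finite-difference identity kills everything of degree below $2r$, and the dominant surviving term is a positive multiple of $(-A_2)^r$.

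\textbf{Step 1: recentering.} Substituting $j=r+i$ with $-r\le i\le r$ gives
\[
L_r(\alpha(n))=\frac{\alpha^2(n+r)}{2}\sum_{i=-r}^{r}(-1)^{i}\binom{2r}{r+i}\frac{\alpha(n+r+i)\,\alpha(n+r-i)}{\alpha^2(n+r)}.
\]
I apply the hypothesis with base point $n+r$ instead of $n$. This is harmless because all hypotheses are asymptotic in $n$. For the endpoints $i=\pm r$, I read the range $-r<j<r$ as inclusive, i.e.\ I assume the expansion holds for $|j|\le r$. Adding the expansions for $+i$ and $-i$, the odd-degree terms cancel and
\[
\log\frac{\alpha(n+r+i)\alpha(n+r-i)}{\alpha^2(n+r)}=\sum_{m=1}^{N}2A_{2m}(n+r)\,i^{2m}+o\bigl(\delta^{tr}\bigr).
\]
Write $u:=-2A_2(n+r)>0$. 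Then $u=\Theta(\delta^t)$, while $2A_{2m}(n+r)=o(\delta^{mt})$ for $m\ge2$.

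\textbf{Step 2: Taylor expansion of the exponential.} Since $i$ ranges over a fixed finite set and all exponents tend to $0$, I expand $\exp\bigl(-u i^2+\sum_{m\ge2}2A_{2m}i^{2m}\bigr)$ as a polynomial in $i$ with a uniformly controlled remainder. A monomial $u^{a}\prod_{m\ge 2}A_{2m}^{b_m}$ multiplies $i^{2s}$, where $s=a+\sum_m m b_m$. By the size assumptions it is $\Theta(\delta^{ts})$ if all $b_m=0$, and $o(\delta^{ts})$ otherwise. Truncating after total degree $i^{2r}$ costs only $O(\delta^{t(r+1)})=o(\delta^{tr})$, and the $o(\delta^{tr})$ error from Step 1 also survives exponentiation as $o(\delta^{tr})$.

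\textbf{Step 3: the finite-difference identity.} I use
\[
\sum_{i=-r}^{r}(-1)^{i}\binom{2r}{r+i}i^{2s}=(-1)^r\sum_{\ell=0}^{2r}(-1)^{\ell}\binom{2r}{\ell}(\ell-r)^{2s},
\]
which is $(-1)^r$ times a $2r$-th finite difference of a polynomial in $\ell$. It therefore vanishes for $s<r$ and equals $(-1)^r(2r)!$ for $s=r$. So every term of degree $i^{2s}$ with $s<r$ drops out exactly. The only surviving contribution of exact order $\delta^{tr}$ comes from $(-u)^r i^{2r}/r!$, and it equals
\[
\frac{(-u)^r}{r!}(-1)^r(2r)!=\frac{(2r)!}{r!}u^r>0.
\]
All the other surviving terms are $o(\delta^{tr})$. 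Hence
\[
L_r(\alpha(n))=\alpha^2(n+r)\frac{(2r)!}{2\,r!}u^r\bigl(1+o(1)\bigr),
\]
which is positive for all sufficiently large $n$.

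\textbf{Main obstacle.} The delicate step is the bookkeeping in Step 2. One must check that every cross term involving some $A_{2m}$ with $m\ge2$ and total degree $2s\ge2r$ is genuinely $o(\delta^{tr})$. This follows from $A_{2m}=o(\delta^{mt})$ together with $u=\Theta(\delta^t)$. One must also check that the remainder of the exponential series is uniform over the finitely many $i$. A secondary point is the endpoint indexing ($|i|=r$ versus the stated open range $-r<j<r$); I resolve it by assuming the expansion holds for $|j|\le r$, as it does in the applications via Theorem \ref{asympform}.
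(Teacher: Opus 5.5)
The paper does not prove this statement. It quotes it from Wang and Yang \cite{Wang-Yang-2024} and uses it as a black box in the proof of Theorem \ref{thm:ineq-any-order}, so there is no internal proof to compare against. Your argument is a correct, self-contained proof.

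\begin{itemize}
\item Recentring at $n+r$ makes the odd coefficients $A_{2m+1}$ cancel exactly. This explains why the statement imposes no conditions on them.
\item Expanding the exponential to order $r$ costs only $O(\delta^{t(r+1)})$.
\item The finite-difference identity $\sum_{i=-r}^{r}(-1)^i\binom{2r}{r+i}i^{2s}=0$ for $s<r$, and $=(-1)^r(2r)!$ for $s=r$, removes all lower-weight terms identically, whatever their size.
\item What survives is $\frac{(2r)!}{r!}\bigl(-2A_2(n+r)\bigr)^r\asymp\delta^{tr}$. The hypotheses $A_{2m}=o(\delta^{mt})$ for $m\ge2$ make every other monomial of weight $s\ge r$ negligible.
\end{itemize}

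Your treatment of the endpoints is not just a convenience; it is necessary. With the range $-r<j<r$ read literally, the statement is false. For $r=1$ the hypothesis concerns only $j=0$ and is vacuous. Take $\alpha(n)=e^{n^2}$, $\delta(n)=1/n$, $t=1$, $N=1$, $A_2(n)=-1/n$ and all other $A_m=0$. These satisfy every assumption, yet
\[
L_1(\alpha(n))=\alpha(n+1)^2-\alpha(n)\alpha(n+2)=e^{2n^2+4n+2}-e^{2n^2+4n+4}<0.
\]
So the expansion must be assumed for $|j|\le r$, as you do. In the paper's application this is available, because \eqref{asym:log-delta} is derived from $x_k(n+j)=\sqrt{x_k^2(n)+\frac{2\pi^2}{3}j}$ for every fixed integer $j$, negative ones included.
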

They also gave the criteria for the
asymptotically complete monotonicity of the sequence $\{\alpha(n)\}$.
\begin{thm}[Wang and Yang]\label{thm:wy-d}
Let $\{\alpha(n)\},\{\delta(n)\},\{A_m(n)\}$ be sequences such that as $n\rightarrow\infty$, $\alpha(n)$ stays positive, $\delta(n)\rightarrow 0^+$, and
\begin{equation*}
\log\left(\frac{\alpha(n+j)}{\alpha(n)}\right)
=\sum_{m=1}^{N}A_m(n)j^m
+R(n,j),
\end{equation*}
where $A_1(n)=\Theta(\delta^{t}(n))$, $A_{m}(n)=o(\delta^{mt}(n))\ (m\geq2)$ and $R(n,j)=o\left(\delta^{rt}(n)\right)$ for some positive $r$ and $t$, then for sufficiently large $n$,
\begin{equation*}
A_1(n)^r\Delta^r\left(\alpha(n)\right)>0.
\end{equation*}
\end{thm}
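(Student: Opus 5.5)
The approach is to reduce everything to the finite differences of a single exponential of a polynomial in $j$, and then use the fact that $\Delta^r$ annihilates polynomials of degree $<r$. Fix $r$ and put
\[
f_n(j):=\sum_{m=1}^{N}A_m(n)j^m ,
\]
so that $\alpha(n+j)=\alpha(n)\exp\bigl(f_n(j)+R(n,j)\bigr)$ for the finitely many $0\le j\le r$ that enter $\Delta^r(\alpha(n))$. Dividing by $\alpha(n)>0$ gives
\[
\frac{\Delta^r(\alpha(n))}{\alpha(n)}=\sum_{j=0}^{r}\binom{r}{j}(-1)^{r+j}e^{f_n(j)}+\sum_{j=0}^{r}\binom{r}{j}(-1)^{r+j}e^{f_n(j)}\bigl(e^{R(n,j)}-1\bigr).
\]
The second sum is handled first. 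Since $\delta(n)\to0^+$, the hypotheses give $f_n(j)=O(\delta^t(n))=o(1)$ uniformly for $0\le j\le r$. Together with $R(n,j)=o(\delta^{rt}(n))$, this makes each summand $O(|R(n,j)|)$, so the second sum is $o(\delta^{rt}(n))$.

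For the main sum, I expand $e^{f_n(j)}=\sum_{\ell\ge0}f_n(j)^\ell/\ell!$ and multiply out $f_n(j)^\ell$ into monomials $A_{m_1}(n)\cdots A_{m_\ell}(n)\,j^{m_1+\cdots+m_\ell}$. Write $D:=m_1+\cdots+m_\ell$ for the $j$-degree of such a monomial. Its coefficient is $O(\delta^{tD}(n))$. Moreover, it is $o(\delta^{tD}(n))$ unless every $m_i=1$, in which case it is exactly $A_1(n)^{D}$. Now apply $\Delta^r$ at $0$, i.e.\ $\sum_j\binom{r}{j}(-1)^{r+j}(\cdot)$:
\begin{itemize}
\item monomials with $D<r$ are annihilated;
\item monomials with $D=r$ give $r!$ times their coefficient;
\item monomials with $D>r$ give a bounded constant, depending only on $r$ and $D$, times their coefficient.
\end{itemize}
To keep this finite, I truncate the exponential series at $\ell\le r$. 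The tail $\ell>r$ is $O(|f_n(j)|^{r+1})=O(\delta^{(r+1)t}(n))$ uniformly in $0\le j\le r$. Among the monomials with $\ell\le r$, only finitely many have $D\ge r$; those with $D\ge r+1$ are $O(\delta^{(r+1)t})$. For $D=r$, the all-ones choice $m_1=\dots=m_r=1$ occurs only at $\ell=r$, with coefficient $1/r!$, and contributes $A_1(n)^r$. Every other monomial with $D=r$ has some $m_i\ge2$ and is $o(\delta^{rt})$.

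Combining these estimates gives
\[
\Delta^r(\alpha(n))=\alpha(n)\Bigl(A_1(n)^r+o\bigl(\delta^{rt}(n)\bigr)\Bigr).
\]
Multiplying by $A_1(n)^r$, and using $A_1(n)^{2r}=\Theta(\delta^{2rt}(n))$ with $A_1(n)^{2r}>0$, yields
\[
A_1(n)^r\Delta^r(\alpha(n))=\alpha(n)\,A_1(n)^{2r}\bigl(1+o(1)\bigr)>0
\]
for sufficiently large $n$.

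The main obstacle is the bookkeeping in the middle step. One must ensure that no monomial of $j$-degree exactly $r$ other than $A_1^r j^r$ survives at order $\delta^{rt}$. This is exactly where the hypothesis $A_m=o(\delta^{mt})$ for $m\ge2$, rather than merely $O(\delta^{mt})$, is essential. One also needs $N\ge r$, or at least that the omitted $A_m$ with $m>N$ are absorbed into $R$, so that the Taylor truncation is legitimate.
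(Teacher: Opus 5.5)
Your proof is correct. The paper gives no proof of its own here, since the criterion is quoted from Wang and Yang, and your argument is the standard expansion argument behind it: divide by $\alpha(n)$, expand $e^{f_n(j)}$ to order $r$, and use that $\sum_{j=0}^{r}\binom{r}{j}(-1)^{r+j}j^{D}$ is $0$ for $D<r$ and $r!$ for $D=r$, so only $A_1(n)^r$ survives at order $\delta^{rt}(n)$.

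You can drop your closing caveat, because nothing in your bookkeeping uses $N\ge r$. The hypothesis $R(n,j)=o(\delta^{rt}(n))$ for $0\le j\le r$ already controls whatever was omitted from the polynomial part.
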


%

In light of Theorems \ref{gorz}-\ref{thm:wy-d}, to establish all the desired inequalities it suffices to obtain an asymptotic expansion for $\log\left(\frac{\Delta_k(n+j)}{\Delta_k(n)}\right)$.
According to Theorem \ref{asympform} and the result of Dong, Ji, and Jia \cite[Theorem 1.2]{Dong-Ji-Jia-2023}, we have, as $n\rightarrow \infty$,
\begin{equation*}
\Delta_k(n)=\frac{ \pi^3\alpha_k}{18 {x^2_{k}(n)}} I_2\left(\sqrt{\alpha_k } {x_{k}(n)}\right)+
\begin{cases}
O\left(x_{k}^{-\frac{3}{2}}(n)
\exp\left(\sqrt{\beta_k}x_{k}(n) \right)\right),& \text{if}~k\geq3,
\\[6pt]
O\left(x_{k}^{-\frac{5}{2}}(n)
\exp\left(\frac{\sqrt{\alpha_k}}2x_{k}(n) \right)\right),& \text{if }~k=1,2.
\end{cases}
\end{equation*}
From the DLMF \cite[(10.30.4)]{Olver-Lozier-Boisvert-Clark-2010}, we know that when $\nu$ is fixed and $z\rightarrow \infty$,
\begin{equation*}
  I_{\nu}(z)\sim \frac{e^z}{\sqrt{2\pi z}},
\end{equation*}
which gives that for $k\geq 1$, as $n\rightarrow \infty$,
\begin{equation*}
\Delta_k(n)\sim\frac{ \pi^{\frac{5}{2}}\alpha_k^{\frac{3}{4}}}{18\sqrt{2} {x^{\frac{5}{2}}_{k}(n)}} \exp\left(\sqrt{\alpha_k } {x_{k}(n)}\right).
\end{equation*}
Representing $x_{k}(n+j)$ by $x_{k}(n)$ as
\begin{equation*}
 x_{k}(n+j)=\sqrt{x_{k}^2(n)+\frac{2\pi^2}{3}j},
\end{equation*}
and applying Taylor's theorem, we have
\begin{align}\label{asym:log-delta}
&\log\left(\frac{\Delta_k(n+j)}{\Delta_k(n)}\right)
\notag\\
&\sim \sqrt{\alpha_k }\left(\sqrt{x_{k}^2(n)+\frac{2\pi^2}{3}j}
-x_{k}(n)\right)-\frac{5}{4}\log\left(1+\frac{2\pi^2}{3x_{k}^2(n)}j\right)
\notag\\
&=\sqrt{\alpha_k }\sum_{m=1}^\infty{\frac{1}{2} \choose m}\left(\frac{2\pi^2}{3}\right)^m\frac{j^m}{x_{k}^{2m-1}(n)}
-\frac{5}{4}\sum_{m=1}^\infty\frac{(-1)^{m+1}}{m}
\left(\frac{2\pi^2}{3x_{k}^2(n)}\right)^mj^m
\notag\\
&=\left(\frac{\sqrt{\alpha_k }\pi^2}{3x_{k}(n)}-\frac{5\pi^2}{6x_{k}^2(n)}\right)j+
\sum_{m=2}^\infty\left(-\frac{\sqrt{\alpha_k }(2m-3)!!}{2^m m!x_{k}^{2m-1}(n)}+\frac{5}{4mx_{k}^{2m}(n)}\right)
\left(-\frac{2\pi^2}{3}\right)^m j^m.
\end{align}

We first verify Theorem \ref{thm:ineq-any-order} by employing \eqref{asym:log-delta} and Theorems \ref{gorz} and \ref{theowy4}.

{\noindent\it{Proof of Theorem \ref{thm:ineq-any-order}.}}
Note that if a sequence $\{\alpha(n)\}$ satisfies the conditions in Theorem \ref{gorz}, then it also satisfies the conditions in Theorem \ref{theowy4}. To confirm $\Delta_k(n)$ satisfies the Tur\'an and Laguerre inequalities of any order, we only need to show it satisfies all the conditions in Theorem \ref{gorz}. From \eqref{asym:log-delta}, we know that the sequence $\{\Delta_k(n)\}$ satisfies the hypotheses of
Theorem \ref{gorz} with
\[A(n)=\frac{\sqrt{\alpha_k }\pi^2}{3x_{k}(n)}+O\left(\frac{1}{x_{k}^2(n)}\right)
\quad \text{and} \quad
\delta(n)=\frac{\alpha_k^{\frac 14} \pi^2}{3\sqrt{2}x_{k}^{\frac32}(n)}+O\left(\frac{1}{x_{k}^{\frac52}(n)}\right).\]
This completes the proof.
\qed

We proceed by applying \eqref{asym:log-delta} again, together with Theorem \ref{thm:wy-d}, to confirm Theorem \ref{thm:ineq-any-order-d}.

{\noindent\it{Proof of Theorem \ref{thm:ineq-any-order-d}.}}
It suffices to prove that the sequence $\{\Delta_k(n)\}$ satisfies the hypotheses of Theorem \ref{thm:wy-d}.
Set
\begin{equation*}
\delta(n)=\frac{\sqrt{\alpha_k }\pi^2}{3x_{k}(n)}+O\left(\frac{1}{x_{k}^2(n)}\right).
\end{equation*}
Applying \eqref{asym:log-delta} gives that
\begin{align*}
\log\left(\frac{\Delta_k(n+j)}{\Delta_k(n)}\right)
=\sum_{m=1}^{r-1}A_m(n)j^m+R(n,j),
\end{align*}
where
\[A_1(n)=\frac{\sqrt{\alpha_k }\pi^2}{3x_{k}(n)}-\frac{5\pi^2}{6x_{k}^2(n)}=\Theta(\delta(n)),
\quad
R(n,j)=O(\delta(n)^{2r-1})=o(\delta(n)^r),\]
and for $m\geq 2$,
\[
A_m(n)=\left(-\frac{\sqrt{\alpha_k }(2m-3)!!}{2^m m!x_{k}^{2m-1}(n)}+\frac{5}{4mx_{k}^{2m}(n)}\right)
\left(-\frac{2\pi^2}{3}\right)^m
=\Theta(\delta(n)^{2m-1})=o(\delta(n)^m).\]
Based on Theorem \ref{thm:wy-d}, it follows that for sufficiently large $n$,
\begin{equation*}
  A_1(n)^r\Delta^r\left(\Delta_k(n)\right)>0,
\end{equation*}
which implies that $\Delta^r\left(\Delta_k(n)\right)>0$.
\qed

\section{Log-concavity}\label{sec:log}
In this section, we show that $\Delta_k(n)$ is log-concave in a certain range by applying Theorem \ref{asympform}. To this end, we need some lemmas. Let
\[
\Theta(n):=\frac{\Delta_k(n-1)\,\Delta_k(n+1)}{\Delta_k^2(n)},
\qquad
\Theta_M(n):=\frac{M_k(n-1)\,M_k(n+1)}{M_k^2(n)},
\qquad
\varepsilon_n:=\frac{|R_k(n)|}{M_k(n)}.
\]
We first give a sufficient condition for $\Theta(n)\le 1$, i.e., $\Delta_k^2(n) \ge \Delta_k(n-1)\,\Delta_k(n+1)$.

\begin{lem}\label{lem:factor}

Assume $\varepsilon_{n-1},\varepsilon_n,\varepsilon_{n+1}<1$. Then
\begin{equation}\label{eq:factor-consequence-exact}
\Theta(n)\le\Theta_M(n)\left(1+\frac{4\varepsilon_n^\ast}
{(1-\varepsilon_n^\ast)^2}\right),
\end{equation}
where $\varepsilon_n^\ast:=\max\{\varepsilon_{n-1},\varepsilon_n,\varepsilon_{n+1}\}$.
In particular, a sufficient condition for $\Theta(n)\le 1$ is
\begin{equation}\label{eq:gap-vs-error-exact}
1-\Theta_M(n)\ge\frac{4\varepsilon_n^\ast}{(1-\varepsilon_n^\ast)^2}.
\end{equation}
Moreover, if $\varepsilon_n^\ast\le \tfrac12$, it suffices that
\begin{equation}\label{eq:gap-vs-error-crude}
1-\Theta_M(n)\ge16\varepsilon_n^\ast.
\end{equation}
\end{lem}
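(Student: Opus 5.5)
We write $\Delta_k(m)=M_k(m)\bigl(1+\theta_m\varepsilon_m\bigr)$ with $\theta_m\in[-1,1]$. This is just the decomposition $\Delta_k(m)=M_k(m)+R_k(m)$ of Theorem \ref{asympform}, with $|R_k(m)|=\varepsilon_m M_k(m)$. Since $M_k(m)>0$ (because $I_2>0$ on $(0,\infty)$) and $\varepsilon_m<1$, each $\Delta_k(m)$ with $m\in\{n-1,n,n+1\}$ is positive, and
\[
\Theta(n)=\Theta_M(n)\cdot\frac{(1+\theta_{n-1}\varepsilon_{n-1})(1+\theta_{n+1}\varepsilon_{n+1})}{(1+\theta_n\varepsilon_n)^2}.
\]

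To bound the correction factor, we make the numerator as large as possible and the denominator as small as possible, each by $\varepsilon_n^\ast$. This gives the bound $\frac{(1+\varepsilon_n^\ast)^2}{(1-\varepsilon_n^\ast)^2}$. Then we use the elementary identity
\[
\frac{(1+\varepsilon)^2}{(1-\varepsilon)^2}=1+\frac{(1+\varepsilon)^2-(1-\varepsilon)^2}{(1-\varepsilon)^2}=1+\frac{4\varepsilon}{(1-\varepsilon)^2},
\]
which yields \eqref{eq:factor-consequence-exact}.

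For the sufficient condition, set $E:=\frac{4\varepsilon_n^\ast}{(1-\varepsilon_n^\ast)^2}\ge0$. If $\Theta_M(n)\le 1-E$, then
\[
\Theta(n)\le(1-E)(1+E)=1-E^2\le 1,
\]
which is \eqref{eq:gap-vs-error-exact}. If moreover $\varepsilon_n^\ast\le\frac12$, then $(1-\varepsilon_n^\ast)^2\ge\frac14$, so $E\le16\varepsilon_n^\ast$. Hence \eqref{eq:gap-vs-error-crude} implies \eqref{eq:gap-vs-error-exact}.

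There is no real obstacle here. The only points needing care are the positivity of $M_k$ and of the factors $1\pm\varepsilon$, which are needed so that the monotone bounds on the numerator and denominator are valid, and the observation that $\Theta_M(n)\ge0$.
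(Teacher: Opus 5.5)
Your proposal is correct and follows the paper's proof essentially step for step: the same two-sided bounds $M_k(m)(1\pm\varepsilon_m)$, the same worst-case replacement of each $\varepsilon_m$ by $\varepsilon_n^\ast$, and the same identity $\frac{(1+\varepsilon)^2}{(1-\varepsilon)^2}=1+\frac{4\varepsilon}{(1-\varepsilon)^2}$. The only difference is cosmetic and comes in the sufficient condition: you bound $\Theta_M(n)(1+E)\le(1-E)(1+E)=1-E^2$, while the paper writes $\Theta_M(n)(1+E)\le 2\Theta_M(n)-\Theta_M(n)^2\le 1$, and both arguments use only $\Theta_M(n)\ge 0$.
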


\begin{proof}

By the decomposition $\Delta_k=M_k+R_k$ and the definition of $\varepsilon_n$,
\[
\Delta_k(n)\le M_k(n)\bigl(1+\varepsilon_n\bigr),\qquad
\Delta_k(n)\ge M_k(n)\bigl(1-\varepsilon_n\bigr),
\]
for every integer $n$. In particular,
\[
\Delta_k(n-1)\le M_k(n-1)(1+\varepsilon_{n-1}),\qquad
\Delta_k(n+1)\le M_k(n+1)(1+\varepsilon_{n+1}).
\]
Then
\[
\Theta(n)=\frac{\Delta_k(n-1)\Delta_k(n+1)}{\Delta_k^2(n)}
\le\frac{M_k(n-1)M_k(n+1)}{M_k^2(n)}\cdot
\frac{(1+\varepsilon_{n-1})(1+\varepsilon_{n+1})}{(1-\varepsilon_n)^2}.
\]

Since $0\le \varepsilon_{n\pm1},\varepsilon_n^\ast<1$, we have
\[
1+\varepsilon_{n\pm1}\le1+\varepsilon_n^\ast,\qquad
1-\varepsilon_n\ge1-\varepsilon_n^\ast>0.
\]
As $u\mapsto u^{-2}$ is decreasing on $(0,\infty)$, we obtain
\[
\frac{(1+\varepsilon_{n-1})(1+\varepsilon_{n+1})}{(1-\varepsilon_n)^2}
\le
\frac{(1+\varepsilon_n^\ast)^2}{(1-\varepsilon_n^\ast)^2}.
\]

For any $\varepsilon\in[0,1)$,
\[
\frac{(1+\varepsilon)^2}{(1-\varepsilon)^2}
= 1+\frac{4\varepsilon}{(1-\varepsilon)^2}.
\]
Applying this with $\varepsilon=\varepsilon_n^\ast$ gives \eqref{eq:factor-consequence-exact}.

From \eqref{eq:factor-consequence-exact},
\[
\Theta(n)\le\Theta_M(n)\bigl(1+A\bigr),\qquad
A:=\frac{4\varepsilon_n^\ast}{(1-\varepsilon_n^\ast)^2}.
\]
If $A\le 1-\Theta_M(n)$, then
\[
\Theta_M(n)(1+A)\le\Theta_M(n)+\Theta_M(n)\bigl(1-\Theta_M(n)\bigr)
=2\Theta_M(n)-\Theta_M(n)^2\le1,
\]
because $t(2-t)\le1$ for all $t\ge0$ (equivalently, $(t-1)^2\ge0$).
This proves \eqref{eq:gap-vs-error-exact}. Finally, if $\varepsilon_n^\ast\le\tfrac12$, then
\[
A=\frac{4\varepsilon_n^\ast}{(1-\varepsilon_n^\ast)^2}\le\frac{4\varepsilon_n^\ast}{(1/2)^2}=16\varepsilon_n^\ast,
\]
so the cruder sufficient condition \eqref{eq:gap-vs-error-crude} follows.
\end{proof}

According to Lemma \ref{lem:factor}, we need an upper bound for $\varepsilon_n$ in order to prove the log-concavity of $\Delta_k(n)$.

\begin{lem}\label{lem:relerr}
Let $\delta_k:=\sqrt{\alpha_k}-\sqrt{\beta_k}>0$. For $\sqrt{\alpha_k}\,x_k(n)\ge4$, we have
\begin{equation}\label{eq:eps}
\varepsilon_n \le C_k\,x_k(n)\,e^{-\delta_k\,x_k(n)},\qquad
C_k:=\frac{432\sqrt{2}k^3 e^{\pi\sqrt{k/3}}}{\pi^{\frac{5}{2}}\alpha_k^{\frac{3}{4}}}.
\end{equation}
\end{lem}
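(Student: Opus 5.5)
We combine the upper bound for $|R_k(n)|$ from Theorem \ref{asympform} with a lower bound for $M_k(n)$, which reduces to a lower bound for $I_2(s)$ at $s=\sqrt{\alpha_k}\,x_k(n)\ge 4$. Writing $x=x_k(n)$, Theorem \ref{asympform} gives
\[
|R_k(n)|\le 12k^3e^{\pi\sqrt{k/3}}x^{-\frac32}e^{\sqrt{\beta_k}x}.
\]
For the main term we need an inequality of the form $I_2(s)\ge c\,\frac{e^{s}}{\sqrt{2\pi s}}\cdot\frac{1}{s}$, or some comparable explicit lower bound. Then
\[
M_k(n)=\frac{\pi^3\alpha_k}{18x^2}I_2(\sqrt{\alpha_k}x)\ge \frac{\pi^3\alpha_k}{18x^2}\cdot c\,\frac{e^{\sqrt{\alpha_k}x}}{\sqrt{2\pi}\,\alpha_k^{1/4}x^{1/2}}\cdot\frac{1}{\sqrt{\alpha_k}x},
\]
so the ratio $\varepsilon_n=|R_k(n)|/M_k(n)$ picks up a factor $x^{-3/2}/x^{-7/2}=x^2$. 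That is worse than the claimed factor $x$. So the loss from the lower bound of $I_2$ must be at most a constant, not a factor of $s$.

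The constant in the claimed bound is consistent with that. Taking $I_2(s)\ge \frac{e^s}{2\sqrt{2\pi s}}$ for $s\ge4$ gives
\[
M_k(n)\ge\frac{\pi^3\alpha_k}{18x^2}\cdot\frac{e^{\sqrt{\alpha_k}x}}{2\sqrt{2\pi}\,\alpha_k^{1/4}x^{1/2}}=\frac{\pi^{5/2}\alpha_k^{3/4}}{36\sqrt2\,x^{5/2}}e^{\sqrt{\alpha_k}x}.
\]
Dividing the two bounds then yields
\[
\varepsilon_n\le\frac{12\cdot36\sqrt2\,k^3e^{\pi\sqrt{k/3}}}{\pi^{5/2}\alpha_k^{3/4}}\,x\,e^{-(\sqrt{\alpha_k}-\sqrt{\beta_k})x},
\]
and $12\cdot36=432$ matches $C_k$ exactly. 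The proof is therefore: establish the lower bound, then do this algebra. Positivity of $\delta_k$ is immediate since $\alpha_k>\beta_k$.

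The key step, and the main obstacle, is proving the explicit bound $I_2(s)\ge \frac{e^s}{2\sqrt{2\pi s}}$ for $s\ge4$. I would use the integral representation
\[
I_\nu(s)=\frac{(s/2)^\nu}{\sqrt\pi\,\Gamma(\nu+\frac12)}\int_{-1}^1(1-t^2)^{\nu-\frac12}e^{st}\,dt,
\]
or equivalently $I_\nu(s)=\frac{e^s}{\sqrt{2\pi s}}\bigl(1-\frac{4\nu^2-1}{8s}+\cdots\bigr)$ with an explicit remainder. For $\nu=2$ the first correction is $-\frac{15}{8s}$, which at $s=4$ is about $-0.47$, so the factor $\frac12$ is plausible but tight.

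A cleaner route is to substitute $t=1-u/s$ in the integral and bound $(1-t^2)^{3/2}=(u/s)^{3/2}(2-u/s)^{3/2}$ from below on $0\le u\le s$. This gives $\int_0^s u^{3/2}(2-u/s)^{3/2}e^{-u}\,du$, to be compared with $2^{3/2}\Gamma(5/2)$. Alternatively, I would verify that $g(s)=\sqrt{s}\,e^{-s}I_2(s)$ is increasing, or directly check $g(4)\ge\frac{1}{2\sqrt{2\pi}}\approx0.1995$: numerically $I_2(4)\approx6.42$, so $g(4)\approx 2\cdot6.42\cdot e^{-4}\approx0.235$. Monotonicity of $g$ follows from $I_2'/I_2\ge 1-\frac{1}{2s}$, which can be derived from the ratio bounds $I_{\nu+1}/I_\nu$ and the recurrence $I_2'=I_3+\frac{2}{s}I_2$. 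The bound then holds for all $s\ge4$, and the algebra above completes the proof.
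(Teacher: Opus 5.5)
Your proof is correct and is the paper's argument: divide the bound on $|R_k(n)|$ from Theorem~\ref{asympform} by the lower bound on $M_k(n)$ that comes from $I_2(t)\ge e^{t}/(2\sqrt{2\pi t})$ for $t\ge 4$, which gives exactly $C_k=12\cdot 36\sqrt{2}\,k^3e^{\pi\sqrt{k/3}}/(\pi^{5/2}\alpha_k^{3/4})$. The only difference is how the Bessel bound is justified. The paper truncates the Hankel expansion using the remainder estimate $|R_3(t)|\le \frac{945}{3072t^3}$, whereas you check $g(4)\approx 0.235>\frac{1}{2\sqrt{2\pi}}$ and use that $g(s)=\sqrt{s}\,e^{-s}I_2(s)$ is increasing; this works, but you must use Amos's lower bound $I_3(s)/I_2(s)\ge s/\bigl(\frac52+\sqrt{s^2+\frac{49}{4}}\bigr)$, which gives $I_2'/I_2\ge 1-\frac{1}{2s}$ for all $s>0$, since the cruder bound $s/(3+\sqrt{s^2+9})$ fails once $s>8.75$.
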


\begin{proof}
From \eqref{eq:M-R}, it follows that
\begin{equation}\label{eq:eps-start}
\varepsilon_n=\frac{|R_k(n)|}{M_k(n)}
 \le \frac{216k^3 e^{\pi\sqrt{k/3}}}{\pi^3\alpha_k}
 \cdot\frac{x_{k}^{\frac{1}{2}}(n)
\exp\left( \sqrt{\beta_k}x_{k}(n) \right)}{ I_2\left( \sqrt{\alpha_k}x_{k}(n) \right)}.
\end{equation}

We claim that for all $t\ge4$,
\begin{equation}\label{eq:I2-lb}
I_2(t)\ \ge\ \frac{e^{t}}{2\sqrt{2\pi t}}.
\end{equation}
To see this, use the large-argument asymptotic expansion (fixed order $\nu$) from the DLMF \cite[\S10.40(i)-(ii)]{Olver-Lozier-Boisvert-Clark-2010}, with the remainder controlled in absolute value (see also \cite{Nemes-2017}). For $\nu=2$ and $t\ge4$,
\[
I_2(t)=\frac{e^{t}}{\sqrt{2\pi t}}
\left(1-\frac{15}{8t}+\frac{105}{128\,t^{2}}+R_{3}(t)\right),
\qquad |R_{3}(t)|\le \frac{945}{3072\,t^{3}}.
\]
Since the third coefficient is positive, to obtain a uniform lower bound we discard the positive term $+\frac{105}{128\,t^2}$ (worst case) and replace $+R_3(t)$ by $-|R_3(t)|$. Hence, for $t\ge4$,
\[
\frac{I_2(t)}{e^{t}/\sqrt{2\pi t}}
\ \ge\ 1-\frac{15}{8t}-\frac{945}{3072\,t^{3}}
\ \ge\ 1-\frac{15}{32}-\frac{945}{196{,}608}
=\frac{103{,}503}{196{,}608}\ >\ \frac12,
\]
which proves \eqref{eq:I2-lb}.

By \eqref{eq:eps-start} and \eqref{eq:I2-lb}, for $\sqrt{\alpha_k}\,x_k(n)\ge4$,
\[
\varepsilon_n \le \frac{432\sqrt{2}k^3 e^{\pi\sqrt{k/3}}}{\pi^{\frac{5}{2}}\alpha_k^{\frac{3}{4}}}
\,x_k(n)\exp\left(-\left(\sqrt{\alpha_k}-\sqrt{\beta_k}\right)x_k(n)\right),
\]
which is \eqref{eq:eps}.
\end{proof}

In order to prove the log-concavity of $\Delta_k(n)$, based on Lemma \ref{lem:factor}, we also need a lower bound for $1-\Theta_M(n)$.

\begin{lem}\label{lem:concavity}
For all $n$ with $x_k(n-1)\ge \dfrac{26}{\sqrt{\alpha_k}}$
, we have
\begin{equation}\label{eq:gap}
1-\Theta_M(n) \ge \frac{\sqrt{\alpha_k}\,\pi^4}{36\,x_k^3(n+1)}.
\end{equation}
\end{lem}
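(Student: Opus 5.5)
Write $M_k(n)=c\,g(x_k(n))$ with $c=\pi^3\alpha_k/18$ and $g(x):=x^{-2}I_2(\sqrt{\alpha_k}\,x)$. Since $x_k^2(n)$ is linear in $n$ with step $\frac{2\pi^2}{3}$, it is natural to pass to the variable $u:=x_k^2(n)$ and set
\[
\phi(u):=\log g(\sqrt u)=-\log u+\log I_2(\sqrt{\alpha_k u}).
\]
Then
\[
\log\Theta_M(n)=\phi(u+h)+\phi(u-h)-2\phi(u),\qquad h=\tfrac{2\pi^2}{3},\quad u=x_k^2(n).
\]
By Taylor's theorem with mean value, this second difference equals $h^2\phi''(\xi)$ for some $\xi\in[u-h,u+h]=[x_k^2(n-1),x_k^2(n+1)]$. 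So the plan is to show $\phi''(\xi)\le -\frac{C}{\xi^{3/2}}$ for an explicit $C$. Then $\log\Theta_M(n)\le -h^2C\,x_k^{-3}(n+1)$, and $1-\Theta_M\ge 1-e^{-y}\ge y-y^2/2$ gives the claimed bound.

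To compute $\phi''$, write $s=\sqrt{\alpha_k u}$ and $L(s)=\log I_2(s)$, so that $\frac{ds}{du}=\frac{s}{2u}$. A direct computation gives
\[
\phi''(u)=\frac{1}{u^2}+\frac{\alpha_k}{4u}\Big(L''(s)-\frac{L'(s)}{s}\Big).
\]
Using the asymptotic expansion with explicit remainder quoted in the proof of Lemma~\ref{lem:relerr}, or better the ratio $I_2'/I_2=I_1/I_2-2/s$ together with the known bounds for Bessel ratios, we get $L'(s)=1-\frac{1}{2s}+O(s^{-2})$ and $L''(s)=\frac{1}{2s^2}+O(s^{-3})$. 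Hence the bracket is $-\frac1s+O(s^{-2})$, and
\[
\phi''(u)\approx -\frac{\sqrt{\alpha_k}}{4u^{3/2}}+O(u^{-2}).
\]
Since $h^2\cdot\frac{\sqrt{\alpha_k}}{4}=\frac{\pi^4\sqrt{\alpha_k}}{9}$, the leading term is four times the target constant $\frac{\sqrt{\alpha_k}\pi^4}{36}$. This leaves a factor of $4$ of room to absorb the error terms, the $u^{-2}$ term, the replacement of $\xi$ by $x_k^2(n+1)$, and the $1-e^{-y}$ step.

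The main obstacle is making the estimates $L'(s)-1+\frac{1}{2s}=O(s^{-2})$ and $L''(s)=O(s^{-2})$ fully explicit for $s\ge 26$; the hypothesis $x_k(n-1)\ge 26/\sqrt{\alpha_k}$ is exactly $s\ge 26$ throughout the interval. I would first try the Riccati equation $y'=1-\frac{y}{s}-y^2+\frac{4}{s^2}$ satisfied by $y=I_2'/I_2$. Combined with two-sided bounds such as
\[
1-\frac{1}{2s}-\frac{a}{s^2}\le y\le 1-\frac{1}{2s}+\frac{b}{s^2},
\]
which can be obtained from the explicit expansion $1-\frac{15}{8s}+\cdots$ with remainder, this yields explicit bounds on $L''=y'$ and on $L'/s$. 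It then suffices to check numerically at $s=26$ that
\[
\frac{\alpha_k}{4u}\Big(L''-\frac{L'}{s}\Big)+\frac1{u^2}\le -\frac{\sqrt{\alpha_k}}{4u^{3/2}}\Big(1-\frac{\text{const}}{s}\Big),
\]
and that the accumulated losses stay below the factor $4$ margin.
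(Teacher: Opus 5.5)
Your proposal is correct and is essentially the paper's argument. Since $u=x_k^2(n)$ is affine in $n$, your mean-value bound $h^2\phi''(\xi)$ is just the paper's bound $\Delta^2 f(n)\le\sup_{|\sigma|\le 1}f''(n+\sigma)$ written in other coordinates, and you share its leading term $-\sqrt{\alpha_k}\pi^4/(9x_k^3)$, its factor-$4$ margin, and its Riccati-equation control of $(\log I_2)'$ and $(\log I_2)''$. One refinement is worth keeping: because $L''-L'/s=1+\frac{4}{s^2}-\frac{2y}{s}-y^2$ is decreasing in $y=I_2'/I_2>0$, you only need the one-sided bound $y=I_1/I_2-\frac{2}{s}\ge 1-\frac{1}{2s}$, which follows from the standard Amos-type estimate $I_2(s)/I_1(s)\le s/\bigl(\frac32+\sqrt{s^2+\frac94}\bigr)$ and gives $\phi''(u)\le-\frac{\sqrt{\alpha_k}}{4u^{3/2}}+\frac{35}{16u^2}$, i.e.\ a relative loss of at most $\frac{35}{4s}\le\frac{35}{104}$ for $s\ge 26$, which makes explicit the constants the paper leaves unspecified.
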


\begin{proof}
Let $f(n):=\log \left(M_k(n)\right)$, and regard $n$ as a continuous variable. Then
\[
\Delta^2 f(n):=f(n+1)-2f(n)+f(n-1)
=\int_{-1}^{1}(1-|s|)\,f''(n+s)\,ds,
\]
\emph{(by the integral form of the second-order Taylor remainder / Peano kernel; see, e.g., \cite{Davis-1975})}
hence
\begin{equation}\label{eq:delta2f-upper}
\Delta^2 f(n)\le\sup_{|s|\le 1} f''(n+s),\qquad
\Theta_M(n)=\exp\big(\Delta^2 f(n)\big).
\end{equation}
Write
\[
f(n)=\log\left(\frac{\alpha_k\pi^{3}}{18}\right)-2\log\left( x_k(n)\right)+\phi(t),
\]
with $\phi(t):=\log \left(I_2(t)\right)$ and $t:=\sqrt{\alpha_k}\,x_k(n)$. Using
\[
x_k'(n)=\frac{\pi^2}{3x_k(n)},\qquad x_k''(n)=-\frac{\pi^4}{9x_k(n)^3},\qquad
t'=\sqrt{\alpha_k}\,x_k'(n),\quad t''=\sqrt{\alpha_k}\,x_k''(n),
\]
a straightforward calculation yields
\begin{equation}\label{eq:fpp}
f''(n)=\frac{4\pi^4}{9x_k(n)^4}
+\phi''(t)\frac{\alpha_k\pi^4}{9x_k(n)^2}
-\phi'(t)\frac{\sqrt{\alpha_k}\,\pi^4}{9x_k(n)^3}.
\end{equation}

We claim that for $t\ge8$,
\begin{equation}\label{eq:phi-bds-safe}
\Big|\phi'(t)-\Big(1-\frac{1}{2t}\Big)\Big|\le \frac{2}{t^2},\qquad
\Big|\phi''(t)-\frac{1}{2t^2}\Big|\le \frac{4}{t^3}.
\end{equation}
By \cite[\S10.40(i)-(ii)]{Olver-Lozier-Boisvert-Clark-2010} together with the error estimates in \cite[\S 10.41]{Olver-Lozier-Boisvert-Clark-2010}, for $t\ge4$,
\[
I_\nu(t)=\frac{e^{t}}{\sqrt{2\pi t}}\!\left(1-\frac{\mu-1}{8t}
+\frac{(\mu-1)(\mu-9)}{2!\,8^2\,t^2}+R_{3}^{(\nu)}(t)\right),
\quad \mu:=4\nu^2,
\]
with a remainder $R_{3}^{(\nu)}(t)=O(t^{-3})$; for fixed $\nu$ and $t\ge4$ this implies
$|R_{3}^{(\nu)}(t)|\le C_\nu/t^3$ for some explicit constant $C_\nu$ (e.g. bounded by a constant multiple of the next coefficient).
Specializing to $\nu=2$ and $\nu=3$ gives
\begin{align*}
I_2(t)&=\frac{e^{t}}{\sqrt{2\pi t}}\left(1-\frac{15}{8t}+\frac{105}{128t^2}+R_3^{(2)}(t)\right),
&|R_3^{(2)}(t)|&\le \frac{C_2}{t^3},\\
I_3(t)&=\frac{e^{t}}{\sqrt{2\pi t}}\left(1-\frac{35}{8t}+\frac{945}{128t^2}+R_3^{(3)}(t)\right),
&|R_3^{(3)}(t)|&\le \frac{C_3}{t^3},
\end{align*}
for some explicit $C_2,C_3$ (one may take, for instance, the absolute value of the first neglected coefficient times a harmless factor; any concrete choice works for $t\ge4$).

Using the standard identity
\[
I_\nu'(t)=I_{\nu+1}(t)+\frac{\nu}{t}I_\nu(t)
\qquad\text{(see DLMF \cite[(10.29.2)]{Olver-Lozier-Boisvert-Clark-2010})},
\]
we obtain for $\nu=2$
\[
I_2'(t)=I_3(t)+\frac{2}{t}I_2(t).
\]
Substituting the expansions and collecting terms up to $t^{-2}$ gives
\[
I_2'(t)=\frac{e^{t}}{\sqrt{2\pi t}}\left(1-\frac{19}{8t}+\frac{465}{128t^2}+S_3(t)\right),
\qquad |S_3(t)|\le \frac{C'}{t^3},
\]
for some explicit $C'$ depending only on the fixed orders (here $2,3$).
Dividing by the corresponding expansion of $I_2(t)$ and using
\(
\dfrac{1+a}{1+b}=(1+a)\,(1-b+b^2-\cdots)
\)
with $a,b=O(t^{-1})$ (and the above remainder bounds), a short algebraic check yields
\[
\phi'(t)=\frac{I_2'(t)}{I_2(t)}
=1-\frac{1}{2t}+\frac{15}{8t^2}+E_3(t),\qquad |E_3(t)|\le \frac{1}{t^3}\quad(t\ge8).
\]
In particular, for $t\ge8$
\[
\left|\phi'(t)-\left(1-\frac{1}{2t}\right)\right|\le \frac{15}{8t^2}+|E_3(t)|
\le \frac{2}{t^2},
\]
proving the first inequality in \eqref{eq:phi-bds-safe}.

The modified Bessel equation
\[
t^2 I_\nu''+t I_\nu'-(t^2+\nu^2)I_\nu=0 \qquad\text{(see DLMF \cite[\S 10.25(i)]{Olver-Lozier-Boisvert-Clark-2010})}
\]
implies the Riccati identity for $\phi(t)=\log \left(I_2(t)\right)$:
\[
\phi''(t)=1+\frac{4}{t^2}-\frac{\phi'(t)}{t}-\big(\phi'(t)\big)^2.
\]
Write $\phi'(t)=1-\tfrac{1}{2t}+u(t)$, where by the first inequality in \eqref{eq:phi-bds-safe}, we have $|u(t)|\le 2/t^2$ and, more sharply, $u(t)=\tfrac{15}{8t^2}+O(t^{-3})$. A direct substitution gives
\[
\phi''(t)=\frac{1}{2t^2}-2\,v(t)-u(t)^2,\qquad
\text{where } v(t):=u(t)-\frac{15}{8t^2}.
\]
Hence $|v(t)|\le 1/t^3$ (for $t\ge8$) and $u(t)^2\le 4/t^4\le 1/t^3$, which yields
\[
\left|\phi''(t)-\frac{1}{2t^2}\right|\le 2|v(t)|+u(t)^2\le \frac{4}{t^3}
\]
for $t\ge8$. This proves \eqref{eq:phi-bds-safe}.

Using \eqref{eq:fpp} and the upper bounds from \eqref{eq:phi-bds-safe}, with $t=\sqrt{\alpha_k}\,x_k(n)$, we have
\begin{equation*}
f''(n)\ \le\ -\frac{\sqrt{\alpha_k}\pi^4}{9x_k^3(n)}\ +\ \frac{5\pi^4}{9x_k^4(n)}\ +\ \frac{2}{3}\cdot\frac{\pi^4}{\sqrt{\alpha_k}\,x_k^5(n)},
\end{equation*}
whenever $t\ge8$ \emph{(and in particular for $t\ge 26$)}. Applying the same bound to $f''(n+s)$ (with $x_k(n)$ replaced by $x_k(n+s)$ and $t$ by $\sqrt{\alpha_k}\,x_k(n+s)$), and then taking the supremum in \eqref{eq:delta2f-upper}, the monotonicity in $x_k(n)$ yields
\[
\mathcal{D}(n):=\Delta^2 f(n) \le -\frac{\sqrt{\alpha_k}\pi^4}{9\,x_k^3(n+1)}
\ +\ \frac{5\pi^4}{9\,x_k^4(n-1)}
\ +\ \frac{2}{3}\cdot\frac{\pi^4}{\sqrt{\alpha_k}\,x_k^5(n-1)}.
\]
As in the original argument, it suffices to ensure that the positive remainders are at most one half of the main (negative) term, namely
\[
\frac{5\pi^4}{9\,x_k^4(n-1)}+\frac{2}{3}\cdot\frac{\pi^4}{\sqrt{\alpha_k}
\,x_k^5(n-1)}\le \frac{1}{2}\cdot\frac{\sqrt{\alpha_k}\pi^4}{9\,x_k^3(n+1)},
\]
which holds for all $x_k(n-1)\ge 26/\sqrt{\alpha_k}$ (equivalently, $t(n-1)\ge 26$). Consequently
\[
\mathcal{D}(n)\le -\frac{\sqrt{\alpha_k}\pi^4}{18\,x_k^3(n+1)} < 0.
\]

From \eqref{eq:phi-bds-safe} we also have, for $t\ge8$,
\[
\phi''(t)\ \ge\ \frac{1}{2t^2}-\frac{4}{t^3},\qquad
\phi'(t)\ \le\ 1-\frac{1}{2t}+\frac{2}{t^2}.
\]
Plugging these into \eqref{eq:fpp} yields the matching lower bound
\[
f''(n)\ \ge\ -\frac{\sqrt{\alpha_k}\pi^4}{9x_k^3(n)}\ -\ \frac{5\pi^4}{9x_k^4(n)}\ -\ \frac{2}{3}\cdot\frac{\pi^4}{\sqrt{\alpha_k}\,x_k^5(n)},
\]
so that $|f''(n)|\le c/x_k^3(n)$ for some explicit constant $c$ when $t\ge 26$. Therefore,
\(
|\mathcal{D}(n)|
=\left|\int_{-1}^{1}(1-|s|)f''(n+s)\,ds\right|
\le c/x_k^3(n)<1
\)
for $x_k(n-1)\ge 26/\sqrt\alpha_k$.

Finally, for $|\mathcal{D}(n)|<1$,
\(
1-e^{\mathcal{D}(n)}\ge -\mathcal{D}(n)-\tfrac{\mathcal{D}^2(n)}{2}\ge \tfrac{-\mathcal{D}(n)}{2}.
\)
Hence
\[
1-\Theta_M(n)\ =\ 1-e^{\Delta^2 f(n)}\ =\ 1-e^{\mathcal{D}(n)}\ \ge\ \frac{1}{2}\cdot\frac{\sqrt{\alpha_k}\pi^4}{18\,x_k^3(n+1)}
=\frac{\sqrt{\alpha_k}\pi^4}{36\,x_k^3(n+1)},
\]
which is \eqref{eq:gap}.
\end{proof}

In light of Lemmas \ref{lem:factor}-\ref{lem:concavity}, we derive another sufficient condition for the log-concavity of $\Delta_k(n)$, which can be used directly to prove Theorem \ref{thm:main}.
\begin{lem}\label{lem:analytic}
Define
$A_k:=\frac{1728\,C_k}{\pi^4\sqrt{\alpha_k}}$.
If $x_k(n-1)\ge \dfrac{26}{\sqrt{\alpha_k}}$ and
\begin{equation}\label{eq:key-ineq}
e^{\delta_k\,x_k(n)}\ \ge\ A_k x_k^4(n)
\qquad\big(\text{equivalently }~ \delta_k x_k(n)\ge \log A_k+ 4\log x_k(n)\big),
\end{equation}
then $\Delta_k^2(n)\ge \Delta_k(n-1)\Delta_k(n+1)$.
\end{lem}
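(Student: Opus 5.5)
The plan is to verify the crude sufficient condition \eqref{eq:gap-vs-error-crude} of Lemma~\ref{lem:factor}, namely $1-\Theta_M(n)\ge 16\varepsilon_n^\ast$ together with $\varepsilon_n^\ast\le\tfrac12$. For the left side, the hypothesis $x_k(n-1)\ge 26/\sqrt{\alpha_k}$ is exactly the hypothesis of Lemma~\ref{lem:concavity}. It gives
\[
1-\Theta_M(n)\ge \frac{\sqrt{\alpha_k}\,\pi^4}{36\,x_k^3(n+1)} .
\]
For the right side, the same hypothesis implies $\sqrt{\alpha_k}\,x_k(m)\ge 26\ge 4$ for $m\in\{n-1,n,n+1\}$, since $x_k$ is increasing. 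So Lemma~\ref{lem:relerr} applies at all three points and yields
\[
\varepsilon_m\le C_k x_k(m)e^{-\delta_k x_k(m)} .
\]

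The next step is to control $\varepsilon_n^\ast$ by quantities at $n$. The function $g(x)=xe^{-\delta_k x}$ is decreasing for $x\ge 1/\delta_k$. I would check that $\delta_k x_k(n-1)\ge 1$ follows from \eqref{eq:key-ineq}: indeed $e^{\delta_k x}\ge A_k x^4$ with $A_k$ huge forces $\delta_k x$ large. Granting that, the worst case is $m=n-1$, so
\[
\varepsilon_n^\ast\le C_k x_k(n-1)e^{-\delta_k x_k(n-1)} .
\]
Now compare $x_k(n-1)$ with $x_k(n)$ and $x_k(n+1)$. Using $x_k^2(n\pm1)=x_k^2(n)\pm \tfrac{2\pi^2}{3}$, we get $x_k(n)-x_k(n-1)\le \tfrac{\pi^2}{3x_k(n-1)}$, which is small once $x_k(n-1)\ge 26/\sqrt{\alpha_k}\ge 26/\sqrt{5/2}$. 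Hence $e^{-\delta_k x_k(n-1)}\le e^{\delta_k\pi^2/(3x_k(n-1))}e^{-\delta_k x_k(n)}$, where the first factor is bounded by a modest absolute constant because $\delta_k<\sqrt{5/2}$. Likewise $x_k(n+1)/x_k(n)$ is at most $1+O(x_k^{-2})$, which is close to $1$.

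Finally, combining these, the sufficient inequality becomes
\[
16 C_k x_k(n)e^{-\delta_k x_k(n)}\cdot c_1\le \frac{\sqrt{\alpha_k}\,\pi^4}{36\,x_k^3(n)}\cdot c_2 ,
\]
with absolute constants $c_1\ge1$ and $c_2\le1$ coming from the shifts. This is equivalent to
\[
e^{\delta_k x_k(n)}\ge \frac{576\,c_1}{c_2}\cdot\frac{C_k}{\pi^4\sqrt{\alpha_k}}\,x_k^4(n).
\]
The constant $A_k=1728C_k/(\pi^4\sqrt{\alpha_k})$ contains a factor $1728/576=3$ of slack, so it suffices that $c_1/c_2\le 3$. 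The same inequality also gives $\varepsilon_n^\ast\le \tfrac{1}{16}\cdot\tfrac{\sqrt{\alpha_k}\pi^4}{36x_k^3}\ll\tfrac12$. The main obstacle is this bookkeeping: making sure the shift factors $e^{\delta_k(x_k(n)-x_k(n-1))}$ and $(x_k(n+1)/x_k(n-1))^{O(1)}$ fit within the factor $3$ when $x_k(n-1)\ge 26/\sqrt{\alpha_k}$. A rough estimate is $\delta_k\pi^2/(3x_k)\le \sqrt{2.5}\cdot 3.29\cdot\sqrt{2.5}/26\approx 0.32$, giving about $e^{0.32}\approx1.37$. The $x$-ratio factors contribute a few percent, so the total should stay below $3$.

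Lemma~\ref{lem:factor} then gives $\Theta(n)\le1$, which is the claimed inequality $\Delta_k^2(n)\ge\Delta_k(n-1)\Delta_k(n+1)$.
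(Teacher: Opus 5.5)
Your proposal is correct and follows essentially the same route as the paper. Both arguments use the crude criterion $1-\Theta_M(n)\ge16\varepsilon_n^\ast$ of Lemma~\ref{lem:factor}, the gap bound of Lemma~\ref{lem:concavity}, and the relative-error bound of Lemma~\ref{lem:relerr} at $n-1,n,n+1$, and both absorb the shift factors into the factor-$3$ slack built into $A_k$. The only difference is bookkeeping. The paper skips your monotonicity step for $xe^{-\delta_k x}$: it bounds $\varepsilon_n^\ast\le C_k x_k(n+1)e^{-\delta_k x_k(n-1)}$ directly and then uses $x_k(n+1)<\tfrac32 x_k(n)$ and $\rho_k\tilde c_k<2$. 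Your route needs $\delta_k x_k(n-1)\ge1$, which you only sketched. It does follow, because $\delta_k x_k(n)\ge\log A_k+4\log x_k(n)$ is large and $\delta_k(x_k(n)-x_k(n-1))<0.32$, and it buys you tighter constants.
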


\begin{proof}
By Lemma \ref{lem:relerr}, for $\sqrt{\alpha_k}\,x_k(j)\ge 4$ we have
\begin{equation*}
\varepsilon_j\ =\ \frac{|R_k(j)|}{M_k(j)}\ \le\ C_k\,x_k(j)\,e^{-\delta_k\,x_k(j)}\qquad(j=n-1,n,n+1).
\end{equation*}
Since $x_k(n)$ is increasing in $n$, we have $x_k(n-1)\le x_k(n)\le x_k(n+1)$ and therefore
\begin{equation}\label{eq:eps-star-bound}
\varepsilon_n^\ast=\max\{\varepsilon_{n-1},\varepsilon_n,\varepsilon_{n+1}\}
\le C_k\, x_k(n+1)\, e^{-\delta_k x_k(n-1)}.
\end{equation}
Using $x_k(n)=\frac{\pi}{6}\sqrt{24n-(2k+2)}$, we have
\[
x_k(n\pm1)=x_k(n)\sqrt{1\pm\frac{24}{24n-(2k+2)}}=x_k(n)\sqrt{1\pm\frac{2\pi^2}
{3x_k^2(n)}},
\]
hence
\[
x_k(n)-x_k(n-1)=x_k(n)\Big(1-\sqrt{1-\tfrac{2\pi^2}{3x_k^2(n)}}\Big)\le \frac{2\pi^2}{3x_k(n)}.
\]
Therefore,
\[
e^{\delta_k(x_k(n)-x_k(n-1))}\ \le\ \exp\!\Big(\frac{2\delta_k\pi^2}{3x_k(n)}\Big)
\ \le\ \exp\!\Big(\frac{2\sqrt{\alpha_k}\,\pi^2}{3x_k(n)}\Big)
\ \le\ \exp\!\Big(\frac{\alpha_k\pi^2}{39}\Big)=:\rho_k
\]
and
\[x_k(n+1)\le x_k(n)\sqrt{1+\frac{2\pi^2}
{3}\cdot \frac{\alpha_k}{26^2}}< \frac{3}{2}x_k(n), \]
where we use that $\delta_k\le \sqrt{\alpha_k}$, $x_k(n)\ge 26/\sqrt{\alpha_k}$ and $\alpha_k<\frac{5}{2}$. These together with \eqref{eq:eps-star-bound} give that
\begin{equation}\label{eq:eps-star}
\varepsilon_n^\ast\le C_k\, x_k(n+1)\, e^{\delta_k (x_k(n)-x_k(n-1)-x_k(n))} < \frac{3}{2}\,\rho_k\,C_k\, x_k(n)\, e^{-\delta_k x_k(n)}.
\end{equation}

Assume \eqref{eq:key-ineq}. Then by \eqref{eq:eps-star} and $x_k(n)\ge 26/\sqrt{\alpha_k}$,
\[
\varepsilon_n^\ast < \frac{3\rho_k\,C_k}{2A_k\,x_k^3(n)}
=\frac{\pi^4\rho_k\sqrt{\alpha_k}}{1152}\cdot\frac{1}{x_k^3(n)}< \frac12.
\]
so \(0\le\varepsilon_n^\ast<\tfrac12\).

Then by Lemma~\ref{lem:factor}, a sufficient condition for
\(
\Theta(n)\le 1
\)
is
\begin{equation}\label{eq:sufficient-factor}
1-\Theta_M(n) \ge 16\,\varepsilon_n^\ast.
\end{equation}
According to \eqref{eq:eps-star}, to prove \eqref{eq:sufficient-factor}, it suffices to show
\begin{equation}\label{eq:need-to-show}
1-\Theta_M(n) \ge 24\,\rho_k\,C_k\, x_k(n)\, e^{-\delta_k x_k(n)}.
\end{equation}
Using Lemma \ref{lem:concavity}, it is enough that
\[
\frac{\sqrt{\alpha_k}\,\pi^4}{36\,x_k^3(n+1)} \ge 24\,\rho_k\,C_k\, x_k(n)\, e^{-\delta_k x_k(n)},
\]
i.e.,
\[
e^{\delta_k x(n)}\ge \frac{864\,\rho_k\,C_k}{\pi^4\sqrt{\alpha_k}}\;x_k(n)x_k^3(n+1).
\]
Moreover, since
\[
\frac{x_k^3(n+1)}{x_k^3(n)}
=\Bigl(1+\frac{2\pi^2}{3x_k^2(n)}\Bigr)^{\!3/2}
\le \exp\Bigl(\frac{\pi^2}{x_k^2(n)}\Bigr)
\le \exp\Bigl(\frac{\pi^2\alpha_k}{676}\Bigr)=:\tilde{c}_k,
\]
we have
\[
\frac{864\,\rho_k\,C_k}{\pi^4\sqrt{\alpha_k}}\;x_k(n)x_k^3(n+1) \le \frac{864\,\tilde{c}_k\,\rho_k\,C_k}{\pi^4\sqrt{\alpha_k}}\;x_k^4(n).
\]
Since \(\tilde{c}_k\rho_k<2\), the right-hand side is $\le \dfrac{1728\,C_k}{\pi^4\sqrt{\alpha_k}}\,x_k^4(n)=A_k x_k^4(n)$, which is exactly the hypothesis \eqref{eq:key-ineq}. Hence \eqref{eq:need-to-show} holds, so \eqref{eq:sufficient-factor} holds, and therefore \(\Theta(n)\le 1\), i.e.,
\[
\Delta_k^2(n) \ge \Delta_k(n-1)\,\Delta_k(n+1).
\]
This concludes the proof.
\end{proof}

Having established an available sufficient condition for $\Delta_k^2(n) \ge \Delta_k(n-1)\,\Delta_k(n+1)$, we are now in a position to prove Theorem \ref{thm:main} directly.

{\noindent\it Proof of Theorem \ref{thm:main}.}
By Lemma~\ref{lem:analytic}, it suffices to verify, for a given $n$,
\begin{equation}\label{eq:two-hyps}
\text{(i)}\quad x_k(n-1)\ \ge\ \frac{26}{\sqrt\alpha_k},
\qquad
\text{(ii)}\quad e^{\delta_k\,x_k(n)}\ \ge\ A_k\,x_k(n)^4,
\end{equation}
where $\delta_k=\sqrt{\alpha_k}-\sqrt{\beta_k}>0$ and
\[
A_k=\frac{1728\,C_k}{\pi^4\sqrt{\alpha_k}},
\qquad
C_k=\frac{432\sqrt{2}k^3 e^{\pi\sqrt{k/3}}}{\pi^{\frac{5}{2}}\alpha_k^{\frac{3}{4}}}.
\]

Since
\[
x_k(n-1)=\frac{\pi}{6}\sqrt{\,24(n-1)-(2k+2)\,},
\]
the condition $x_k(n-1)\ge 26/\sqrt\alpha_k$ is equivalent to
\[
24(n-1)-(2k+2)\ \ge\ \frac{26^2\cdot 36}{\pi^2\alpha_k}
\quad\Longleftrightarrow\quad
n\ \ge\ 1+\frac{k+1}{12}+\frac{1014}{\alpha_k\pi^2}.
\]
Hence, for any integer $n\ge \left\lceil \dfrac{1014}{\alpha_k\pi^2}+\dfrac{k+1}{12}+1\right\rceil$,
\eqref{eq:two-hyps}(i) holds.

Let $F(n):=\dfrac{e^{\delta_k x_k(n)}}{x_k^4(n)}$. Since $x_k'(n)=\dfrac{\pi^2}{3x_k(n)}>0$,
\[
\frac{d}{dn}\log \left(F(n)\right)=\delta_k x_k'(n)-\frac{4x_k'(n)}{x_k(n)}
=\frac{\pi^2}{3x_k(n)}\Big(\delta_k-\frac{4}{x_k(n)}\Big).
\]
Hence $F(n)$ is strictly increasing whenever $x_k(n)\ge 4/\delta_k$. At the threshold
\[
n_0(k):=8\,k^{3}+\frac{k+1}{12},
\]
we have
\begin{equation}\label{eq:x0}
x_k\left(n_0(k)\right)=\frac{\pi}{6}\sqrt{24n_0(k)-(2k+2)}
= \frac{\pi}{6}\sqrt{24\Big(n_0(k)-\frac{k+1}{12}\Big)}
=\frac{4\pi}{\sqrt{3}}\,k^{3/2},
\end{equation}
while
\[
\frac{4}{\delta_k} \le \frac{4(2k+1)}{6}\sqrt{\frac{5}{2}}
= \frac{2\sqrt{10}}{3}\,k+\frac{\sqrt{10}}{3}
\]
since
\begin{equation}\label{bound-deltak}
\delta_k=\sqrt{\alpha_k}-\sqrt{\beta_k}=\frac{\alpha_k-\beta_k}{\sqrt{\alpha_k}+\sqrt{\beta_k}}
=\frac{12}{(2k+1)\,(\sqrt{\alpha_k}+\sqrt{\beta_k})}
\ \ge\ \frac{6}{2k+1}\sqrt{\frac{2}{5}}.
\end{equation}
Thus for $k\ge3$,
\[
x_k(n_0) \ge \frac{4\pi}{\sqrt{3}}\,k^{3/2} \ge 4\pi\,k\ >\ \frac{2\sqrt{10}}{3}\,k+\frac{\sqrt{10}}{3}\ \ge\ \frac{4}{\delta_k},
\]
and consequently $F(n)$ is increasing for all $n\ge n_0(k)$.

Combining with the displayed formula for $A_k$ and the elementary bound
$\alpha_k^{-5/4}\le1$, we get
\begin{equation}\label{eq:Ak-rough}
A_k\ \le\ \widetilde{A}_k:=\frac{1728}{\pi^{\frac{13}{2}}}\,\cdot 432\sqrt{2}\;k^{3}\,e^{\pi\sqrt{k/3}}.
\end{equation}
When $n\ge n_0(k)$, the monotonicity of $F(n)$ and \eqref{eq:Ak-rough} give that a sufficient condition for \eqref{eq:two-hyps}(ii) is
\begin{equation}\label{eq:key-ineq-sufficient}
\delta_k\,x_k(n_0(k)) \ge \log \widetilde{A}_k+4\log\left( x_k(n_0(k))\right).
\end{equation}

Applying \eqref{eq:x0} and \eqref{bound-deltak}, for $k\ge 3$,
we obtain the explicit uniform lower bound
\begin{equation*}
\delta_k\,x_k\left(n_0(k)\right) \ge 24\pi\sqrt{\frac{2}{15}}\cdot\frac{k^{3/2}}{2k+1}
 \ge\frac{72\pi}{7}\sqrt{\frac{2k}{15}}.
\end{equation*}
On the other hand, from \eqref{eq:x0} and \eqref{eq:Ak-rough},
\[
\log \widetilde{A}_k+4\log \left(x_k\left(n_0(k)\right)\right)
=\pi\sqrt{k/3}+9\log k + \mathrm{const},
\]
where
\[
\mathrm{const}
:=\log\!\Big(\frac{1728}{\pi^{\frac{13}{2}}}\,\cdot 432\sqrt{2}\Big)
+4\log\!\Big(\frac{4\pi}{\sqrt{3}}\Big)
<14.4.
\]
Consequently, \eqref{eq:key-ineq-sufficient} follows from the inequality
\[
\left(\frac{72\pi}{7}\sqrt{\frac{2}{15}}-\frac{\pi}{\sqrt{3}}\right)\sqrt{k}\ \ge\ 9\log k + 14.4,
\]
which holds for all $k\ge 16$.
For $k=3$ and for $5\le k<16$, we verify \eqref{eq:two-hyps}(ii) \emph{at the threshold} $n=n_0(k)$ by a finite computation using the exact constants $C_k$ (thus the exact $A_k$). By the monotonicity of $F(n)$ established above, this implies that \eqref{eq:two-hyps}(ii) then holds for all $n\ge n_0(k)$ and each such $k$.

For $k=4$, condition \eqref{eq:two-hyps}(ii) at
\(n\ge 526\) is verified by a computation
using the exact constants \(C_k\) (hence \(A_k\)).

Note that for $k\ge3$,
\[\max\left\{\left\lceil
8k^{3}+\frac{k+1}{12}\right\rceil,
\left\lceil\frac{1014}{\alpha_k\pi^2}+\frac{k+1}{12}+1
\right\rceil\right\}=\left\lceil
8k^{3}+\frac{k+1}{12}\right\rceil.
\]
Therefore, for all $k\ge3$ and $n\ge \max\left\{\left\lceil
8k^{3}+\frac{k+1}{12}\right\rceil,526\right\}$,
the two hypotheses \eqref{eq:two-hyps} of Lemma~\ref{lem:analytic} are satisfied.
Therefore $\Delta_k^2(n)\ge \Delta_k(n-1)\Delta_k(n+1)$, which completes the proof.
\qed

 \vspace{0.5cm}
 \baselineskip 15pt
{\noindent\bf\large{\ Acknowledgements}} \vspace{7pt} \par
The author wishes to thank the referee for valuable suggestions. The author is sincerely grateful to her supervisor, Helen W. J. Zhang, for continuous guidance and support. The author also thanks Professor Kathrin Bringmann for valuable discussions and insightful feedback during a visit to the University of Cologne. The author gratefully acknowledges the financial support of the China Scholarship Council (CSC).

\end{document}